\newtheoremstyle{slanted}%
  {}{}%
  {\slshape}{}%
  {\bfseries}{.}%
  { }{}
\theoremstyle{slanted}
\newtheorem{thm}{Theorem}[section]
\newtheorem{cor}[thm]{Corollary}
\newtheorem{lem}[thm]{Lemma}
\newtheorem{prop}[thm]{Proposition}
\newtheorem*{prop*}{Proposition}
\newtheorem*{fact*}{Fact}
\newtheorem*{claim*}{Claim}
\theoremstyle{definition}
\newtheorem{defi}[thm]{Definition}
\newtheorem{exa}[thm]{Example}
\theoremstyle{remark}
\newtheorem*{rem*}{Remark}
\newcommand{\0}{\mathclap{\phantom{\emptyset}}0}
\newcommand{\aru}{\@ifstar\@saru\@aru}
\newcommand{\@saru}[1]{\mathop{\left(\exists #1\right)}}
\newcommand{\@aru}[1]{\mathop{\exists #1}}
\newcommand{\zenbu}{\@ifstar\@szenbu\@zenbu}
\newcommand{\@szenbu}[1]{\mathop{\left(\forall #1\right)}}
\newcommand{\@zenbu}[1]{\mathop{\forall #1}}
\newcommand{\caru}[1]{{\left\langle#1\right\rangle}}
\newcommand{\czenbu}[1]{{\left[#1\right]}}
\newcommand{\inv}{{-1}}
\newcommand{\Fr}{Fra\"iss\'e}
\DeclareMathOperator{\dom}{dom}
\DeclareMathOperator{\ran}{ran}
\DeclareMathOperator{\Th}{Th}
\DeclareMathOperator{\tp}{tp}
\newcommand{\cmodels}{\Vdash}
\title[Cologic of closed covers of compacta]{Cologic of closed covers of compacta and the pseudo-arc}
\author{Kentar\^o Yamamoto}
\date{\today}
\begin{document}
\maketitle
\begin{abstract}
  A formal system called cologic is proposed for the study of compacta.
  A counterpart of countable model theory is developed for this system,
  and it is applied to model theory of the pseudo-arc.
\end{abstract}
\section{Introduction}
This article describes a way to import some techniques of first-order model theory,
especially countable model theory,
into the study of (metrizable) compacta.
The key feature of our approach that sets it apart from others
(e.g., \cite{Bankston1984, Flum1980}) is
that it aligns with a successful viewpoint in constructing and analyzing compacta:
compacta as the quotients of projective \Fr\ limits.
Pioneered by Irwin and Solecki~\cite{Irwin2006ProjectiveFL},
projective \Fr\ limits are
zero-dimensional compact spaces equipped with a closed binary relation,
called pre-spaces,
constructed as the inverse limits of
finite graphs and certain surjections.
Desired compacta are then obtained as the natural quotients of pre-spaces.
Kruckman~\cite{kruckman17:_first_order_logic_local_finit} proposes
that a formal system he calls \emph{cologic} be studied.
Cologic describes pre-spaces in terms of their finite quotients (graphs)
and surjections among them,
while ordinary first-order logic describes infinite structures
in terms of their finite substructures (tuples)
and injections among them.
Then familiar theorems of countable model theory,
e.g.,
that \Fr\ limits in a finite relational language have an $\omega$-categorical theory with quantifier elimination,
would obtain in regard to cologic and projective \Fr\ limits.

In this article, we explore the possibility of using cologic
to study compacta directly.
As we will see in \S~\ref{sec:background},
certain pre-spaces whose natural quotient is the given compactum~$K$
and bases of closed sets of $K$ consisting of regular closed sets
are in one-to-one correspondence.
Therefore, Kruckman's cologic might be called the logic of certain bases
of compacta,
while the cologic developed here would be the logic of compacta themselves.
This is the motivation behind developing the new approach.
Our cologic will describe compacta in terms of their
finite closed covers (of certain form) and how they refine each other.

Our direct approach poses a few obstacles.
For instance, there is no compactness theorem for cologic of compacta themselves.
One can still appeal to compactness theorem for cologic of pre-spaces
to obtain a model,
which, however, may have a theory different from that of its natural quotient.
Moreover,
the relationship between cologic and countable model theory,
including the \Fr\ theory,
is not obvious with cologic of compacta themselves.
This is because previous authors only considered---for good reason---%
second-countable pre-spaces,
and by the aforementioned one-to-one correspondence, Kruckman's cologic
concerns certain \emph{countable} bases of compacta.

In spite of this, we aim to demonstrate the utility of cologic of
compacta themselves
by proving a counterpart of Vaught's theorem on homogeneity
of countable atomic structures
and applying it to Irwin's and Solecki's original example
the pseudo-arc.
We define the notion of the type realized by each given finite cover
consisting of regular closed sets of a compacta.
Then principal types are defined in the usual manner.
We introduce the notion of cofinal atomicity (Definition~\ref{defi:cofat}),
which is weaker than the requirement that
every such finite cover realize a principal type.
Cofinal atomicity implies homogeneity (Theorem~\ref{thm:vaught}):
two finite covers of the correct kind of a compactum~$K$
are conjugate under an autohomeomorphism of $K$
if $K$ is cofinally atomic with the two covers realizing the same type.
Moreover, we show that if a compactum is the quotient of
two second-countable pre-spaces satisfying mild conditions,
the two pre-spaces are isomorphic
provided that they have the same cological theory and that they are cofinally atomic (Theorem~\ref{thm:uniqueness}).
This justifies  the use of pre-spaces in place of compacta themselves
in model theory of compacta in certain cases.
We then turn to the specific example of the pseudo-arc.
Notwithstanding the aforementioned general fact,
Irwin's and Solecki's projective \Fr\ limit has the same cological theory
as its natural quotient, the pseudo-arc (Theorem~\ref{thm:IR pre-space is elementary substr}).
We see this by adapting the notion of elementary substructures.
We use this fact to establish the cofinal atomicity of the pseudo-arc
(Corollary~\ref{cor:main}).
The conclusion is that we can view the homogeneity of the pseudo-arc
(Corollary~\ref{cor:homog})
in the sense above as a consequence of its logical property.

This article is organized as follows:
In \S~\ref{sec:background},
we explain the mild condition we impose on pre-spaces and
its relationship with our focus on finite covers consisting of
regular closed sets of compacta.
In \S~\ref{sec:covers},
we define the kind of covers that we study
and establish basic facts on covers and the refinement relation between them.
The formal system of cologic of both pre-spaces and compacta is introduced
in \S~\ref{sec:basics-cologic},
where we also develop the counterpart of countable model theory.
Finally,
\S~\ref{sec:appl-pseudo-arc} is
where we apply everything thus far
to the pseudo-arc
and prove the main result.

In our presentation, we prioritize establishing
the aforementioned facts on the pseudo-arc
without much regard to presenting the most generalized results
or transferring as many theorems from classical model theory as possible.
It is plausible that our results are applicable to other compacta similar
enough to the pseudo-arc
and that more counterparts of countable model theory is true of cologic
of compacta.
It remains an important future task to explore these possibilities.

\section{Background}\label{sec:background}
The approach taken in this article is that only pre-spaces
whose natural surjections are irreducible are \emph{bona fide} models of Kruckman's cologic
and that compacta are described in terms of the family of their covers
by regular closed subsets that merely touch each other.
In this section, we present some evidence suggesting that this is a reasonable
framework together with relevant basic definitions.

We find it useful to recall basic facts on regular closed sets here.
\newcommand{\regular}{\mathcal R}
We write $\regular(X)$ for the family of regular \emph{closed}
subsets of a topological space~$X$.
With a suitable set of operations,
$\regular(X)$ is a complete Boolean algebra:
$0$ is the empty set, $1$ is $X$,
$\vee$ is the set-theoretic union,
and, most importantly, $\neg$ is given by $\neg F = \overline{X \setminus F}$.
Then, as a derivative operation, $\wedge$ satisfies
$F \wedge F' = \overline{(F \cap F')^\circ}$.

\newcommand{\prox}{\mathrel{\delta}}
A useful additional structure on $\regular(X)$ is the \emph{proximity relation}%
~$\prox$ defined by
$a \prox b \iff a \cap b \neq \0$.
for $a, b \in \regular(X)$.
This makes $\regular(X)$ a \emph{contact algebra}~$(\regular(X), \delta)$.
Here, following \cite{Koppelberg2012}, a contact algebra is Boolean algebra~$B$ expanded with a symmetric binary
relation~$\prox$ satisfying the following analog of the axioms of proximity spaces:
\begin{align}
&  0 \centernot\prox a \notag,\\
&  a \prox a   \text{ if }  a\neq 0, \notag\\
&  x \prox (y \vee z)  \iff  x \prox y \text{ or } x \prox z.\label{eq:contact vee}
\end{align}

\begin{defi}
  \begin{enumerate}[(i)]
  \item A \emph{compactum} is a compact \emph{metrizable nonempty}
    space.  
  \item A \emph{pre-space} is a pair $X_0 = (X_0, E)$ of a totally
    disconnected compact space~$X_0$ and a equivalence
    relation~$E \subseteq X_0 \times X_0$.  
    Important examples of pre-spaces are finite graphs,
    which, in this paper, are regarded as discrete spaces
    and supposed to have reflexive edges at all vertices.
  \item We write $|X_0|$ for the \emph{natural quotient} of a pre-space~$X_0$,
    i.e., the set of $E$-classes of $X_0$ with the quotient topology.
  \item Let $X_0 = (X_0, E)$ be a pre-space and $X$ be a compactum.
    A continuous surjection $\pi: X_0 \twoheadrightarrow X$
    \emph{induces} $E$ if $x\mathbin{E}y \iff \pi(x) = \pi(y)$.
    For instance, this occurs when $X$ is the natural quotient of $X_0$
    and $\pi$ is the natural surjection.
  \item Let $X_i = (X_i, E_i)$ ($i <2$) be pre-spaces.
    A continuous surjection $f: X_0 \twoheadrightarrow X_1$
    is an epimorphism in the sense of Irwin and Solecki~\cite{Irwin2006ProjectiveFL},
    or an \emph{Irwin-Solecki epi},
    if
    \begin{align*}
      (x_1, y_1) \in E_1 &\implies (f(x_1), f(y_1)) \in E_2,\\
      (x_2, y_2) \in E_2 &\implies \aru*{x_1 \in f^\inv(x_2)}\aru*{y_1 \in f^\inv(x_2)}
      [(x_1, y_1) \in E_2].
    \end{align*}
  \item A surjection~$\pi: Y \to Z$ is \emph{irreducible} if
    there exists a proper closed subset $Y' \subseteq Y$ such that
    $\pi``Y' = Z$.
\end{enumerate}
\end{defi}
Given a compactum~$X$,
we may construct  construct a pre-space whose quotient is $X$
as was described by Woods~\cite{98ca69f0-6429-3dcd-bf45-5f38c098a482}.
It is convenient to define it via a general construction for
arbitrary contact algebras~$B$.
Given such $B$,
let $S(B)$ be the Stone space of the Boolean algebra $B$:
as a set, $S(B)$ is the set ultrafilters of $\mathcal B$,
and its topology is generated by sets of the form~$[F]= \{x \in S(B) \mid F \in x\}$ where $F \in \mathcal B$.
Define $E \subseteq S(B) \times S(B)$ by
$x \mathrel{E} y \iff \aru*{a \in x}\aru*{b \in y}[a \prox y]$.
This makes $S(B) := (S(B), E)$ a pre-space.
Now consider $\mathcal B_0 \subseteq \regular(X)$ a basis of closed sets.
Let $\mathcal B$ be the Boolean algebra generated 
(as a subalgebra of $\regular(X)$)
by $\mathcal B_0$.
Let $a_{\mathcal B_0}X = S(\mathcal B$).
Then
one can show that 
$\pi: a_{\mathcal B_0}X \to K$ defined by $x \mapsto \bigcap x$ is a continous surjection
and that
 $E = \{(x, y) \in a_{\mathcal B_0}X \times X \mid \pi(x) = \pi(y)\}$.
Then $a_{\mathcal B_0}X := (a_{\mathcal B_0}X, E)$ is a prespace and $|(a_{\mathcal B_0}X, E)| \cong X$.
Note that $\pi``[F] = F$ for every regular closed $F \subseteq X$.

It is easy to show that the continuous surjection
constructed in the preceding paragraph is irreducible \cite[6H.(3)]{Porter1988}.
This is not true of every natural surjection from a pre-space.
In fact, there are projective \Fr\ limits whose natural surjections
are not irreducible.
\begin{exa}
Let $G_n$ ($n < \omega$) be the graph $2^n \sqcup \{*\}$
with the only non-reflexive edge between $1^n$ and $*$,
where $2^n$ is the set of binary strings of length~$n$.
Let $f_{mn}: G_n \twoheadrightarrow G_m$ ($m \le n < \omega$)
be Irwin-Solecki epis defined by 
$f_{mn}(\sigma) = \sigma \upharpoonright n$
for any string~$\sigma \in 2^m$ and $f_{mn}(*) = *$.
Then $\varprojlim (G_n, f_{mn})$
is the pre-space $(2^\omega \sqcup \{\star\}, E)$,
where $E$ consists of the reflexive edges and another between $\star$
and the rightmost path.
Then $|(2^\omega \sqcup \{\star\}, E)| \cong 2^\omega$.
The canonical surjective continuous map $\pi :(2^\omega \sqcup \{\star\}, E) \twoheadrightarrow |(2^\omega \sqcup \{\star\}, E)|$
is not irreducible:
$\operatorname{ran} \pi = \pi`` (\operatorname{dom} \pi \setminus \{\star\})$;
since $\star$ is isolated,
$\operatorname{dom} \pi \setminus \{\star\}$ is a closed subset.
Even though we do not go into technical details,
but this example~$(2^\omega \sqcup \{\star\}, E)$ may be thought of as
the projective \Fr\ limit of the category of
graphs of the form $G_n$ and suitable morphisms.
\end{exa}
One must admit that the example above is neither natural nor economical.
On the other hand, focusing on pre-spaces those natural surjections are
irreducible has the benefit of clarifying the correspondence between
the finite quotients of pre-spaces by Irwin-Solecki epis,
which Kruckman's cologic describes,
and the finite covers of compacta by regular closed sets
that merely touch
which our cologic as defined later describes.
Here, a finite regular closed cover~$(F_i)_{i<n}$ \emph{merely touch} 
if $(F_i \cap F_j)^\circ = F_i^\circ \cap F_j^\circ = \0$ if $i \neq j$.

The correspondence described in the preceding paragraph should
more accurately be stated as follows.
First, note that every quotient of a pre-space~$X_0$ by an Irwin-Solecki epi
has an isomorphic copy as a partition of $X_0$ by clopen sets.
Then for a fixed compactum~$X$,
there is a special pre-space~$X_0$ and a natural surjection $\pi$
such that the finite quotients~$G = (\{P_1, \dots, P_{|G|}\}, E(G))$
of $X_0$ by Irwin-Solecki epis
and the finite covers of $X$ by regular closed sets that merely touch
are in one-to-one correspondence via $G \mapsto \{\pi`` P_1, \dots, \pi``P_{|G|}\}$,
where $P_1, \dots, P_{|G|}$ are clopen subsets of $X_0$.
Moreover, for a restriction of the correspondence onto a smaller family of
such covers of $X$,
the pre-space $X_0$ can be chosen to be ``smaller.''

To see this, let us first consider how to construct finite quotients of pre-spaces
from finite covers of a compactum~$X$ by regular closed sets that merely touch.
Take $\mathcal B_0$ be $\regular(X)$ or large enough
to contain all regular closed sets occurring in such covers that we want to deal with.
Let $X_0 = a_{\mathcal B_0} X$.
We have $\pi ``[F_i] = F_i$ as noted before.
To show that $([F_i])_i$ is a partition of $X_0$,
note that by assumption, $F_i \cap F_j$ has an empty interior,
i.e., $F_i \wedge F_j = 0$.
By general facts on the Stone duality,
$[F_i] \cap [F_j] = [F_i \wedge F_j] = [0] = \0$ as desired.

For the other direction of the correspondence,
suppose that $X_0 = (X_0, E)$ is a pre-space and that
the natural continuous surjection~$\pi:X_0 \to X$
is irreducible.
Let $(P_i)_{i<n}$ be a partition of $X_0$ by clopen subsets.
Then it may be shown that the image of $(P_i)_{i<n}$ under $\pi$
is a regular closed cover of $X$ that merely touch
\cite[Theorem~6.5(d)]{Porter1988}%
\footnote{The theorem is about $\theta$-continuous maps,
  example of which are continuous maps.}
as clopens in $X_0$ are regular closed.

It turns out that the aforementioned correspondence also preserves the contact
algebra structure.
To see this,
\newcommand{\clopen}{\mathcal B}
we write $\clopen(X_0)$ for the family of clopen subsets of a  a pre-space~$(X_0, E)$
and turn it into a contact algebra~$(\clopen(X_0), \delta)$
by declaring $a \prox b$ if and only if $x \mathbin{E} y$ for some $x\in a$
and $y \in b$.
We remark in passing that $S(\clopen(X_0)) \cong X_0$.
\begin{prop}
  Let $X$ be a compactum.
  \begin{enumerate}[(i)]
  \item\label{item:1}
    Let $X_0 = (X_0, E)$ be an arbitrary pre-space
    and $\pi: X_0 \twoheadrightarrow X$ 
    an irreducible continuous surjection inducing $E$.
    Then $\pi$
    induces the contact algebra embedding~$e_\pi: \clopen(X_0) \hookrightarrow \regular(X)$
    by $P \mapsto \pi``P$.
  \item \label{item:2}
    In \ref{item:1}, the image of $e_\pi$ is a basis of closed sets.
  \end{enumerate}
\end{prop}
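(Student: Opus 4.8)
The plan is to verify separately that $e_\pi$ is well defined and Boolean, that it is injective, and that it preserves $\prox$ in both directions, and then to read off~\ref{item:2}. The one nontrivial ingredient is the standard reformulation of irreducibility: every nonempty open $U \subseteq X_0$ contains a full fibre $\pi^\inv(z)$ of some $z \in X$, since otherwise $X_0 \setminus U$ would be a proper closed subset with $\pi``(X_0 \setminus U) = X$. Fix a clopen $P$; its complement $\neg P = X_0 \setminus P$ is clopen as well, both $\pi``P$ and $\pi``(\neg P)$ are closed as images of compacta, and they cover $X$ by surjectivity.

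The heart of the argument is the claim that $X \setminus \pi``(\neg P) = \{z : \pi^\inv(z) \subseteq P\}$ is dense in $\pi``P$. Given $x \in \pi``P$ and an open $V \ni x$, the set $\pi^\inv(V) \cap P$ is nonempty and open, so irreducibility supplies a fibre $\pi^\inv(z) \subseteq \pi^\inv(V) \cap P$; then $z \in V$ and $\pi^\inv(z) \subseteq P$, i.e.\ $z \in V \cap (X \setminus \pi``(\neg P))$. Since $X \setminus \pi``(\neg P)$ is an open subset of $\pi``P$ dense in it, $\pi``P$ is regular closed---so $e_\pi$ indeed maps into $\regular(X)$---and in fact $(\pi``P)^\circ = X \setminus \pi``(\neg P)$. (Alternatively one may cite \cite[Theorem~6.5(d)]{Porter1988} for regular closedness.)

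The laws $e_\pi(\0) = \0$, $e_\pi(X_0) = X$, and $e_\pi(P \cup P') = e_\pi(P) \vee e_\pi(P')$ hold because images commute with unions and preserve $\0$ and $X$. For complementation, applying the density claim with the roles of $P$ and $\neg P$ exchanged gives that $X \setminus \pi``P$ is dense in the closed set $\pi``(\neg P)$, whence $\pi``(\neg P) = \overline{X \setminus \pi``P} = \neg e_\pi(P)$; thus $e_\pi$ is a Boolean homomorphism, and it is injective because its kernel $\{P : \pi``P = \0\}$ is $\{\0\}$. That it preserves $\prox$ uses only that $\pi$ induces $E$: if $x \in P$, $y \in P'$ and $x \mathbin{E} y$, then $\pi(x) = \pi(y) \in \pi``P \cap \pi``P'$; conversely any $z \in \pi``P \cap \pi``P'$ has preimages $x \in P$, $y \in P'$ with $\pi(x) = \pi(y)$, so $x \mathbin{E} y$ and $P \prox P'$. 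This establishes~\ref{item:1}.

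For~\ref{item:2}, let $U \subseteq X$ be open and $x \in U$. The fibre $\pi^\inv(x)$ is compact and contained in the open set $\pi^\inv(U)$, so, $X_0$ being compact and totally disconnected, there is a clopen $W$ with $\pi^\inv(x) \subseteq W \subseteq \pi^\inv(U)$. With $P = X_0 \setminus W$ we get $x \in X \setminus \pi``P = \{z : \pi^\inv(z) \subseteq W\} \subseteq U$, so the sets $X \setminus \pi``P$ form a basis of open sets and the image of $e_\pi$ is a basis of closed sets. The main obstacle is the density claim of the second paragraph: it is precisely where irreducibility enters, and without it $\pi``P$ may fail to be regular closed and complementation may fail, so $e_\pi$ would not even be well defined; the preservation of $\prox$, injectivity, and the basis property are by comparison routine.
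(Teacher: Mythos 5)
Your proof is correct; it follows the same decomposition as the paper's (Boolean structure, then the proximity relation, then the basis property) but is genuinely more self-contained at the one point where the paper is not. For (i), the paper delegates the hardest step---that $P \mapsto \pi``P$ is a well-defined Boolean embedding of $\clopen(X_0)$ into $\regular(X)$---to the cited result \cite[Theorem~6.5(d)]{Porter1988} and verifies by hand only the preservation and reflection of $\prox$, by the same computation you give (both directions using only that $\pi$ induces $E$, plus surjectivity). You instead prove the delegated step from scratch: the fibre reformulation of irreducibility yields your density claim $\pi``P = \overline{\{z \mid \pi^\inv(z) \subseteq P\}}$, from which regular closedness, the identification $(\pi``P)^\circ = X \setminus \pi``(\neg P)$, and the law $e_\pi(\neg P) = \neg e_\pi(P)$ all follow; this has the virtue of making visible exactly where irreducibility enters---in the paper's example of a non-irreducible natural surjection onto $2^\omega$, the image $\pi``\{\star\}$ of the isolated point is a singleton of the Cantor set, which is not regular closed, so $e_\pi$ is not even well defined there. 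For (ii), your argument and the paper's are dual phrasings of the same separation statement: the paper separates a closed $F$ from a point $x \notin F$ via disjointness of $\pi^\inv(F)$ and $\pi^\inv(x)$, normality, Urysohn's Lemma, and strong zero-dimensionality, whereas you sandwich the compact fibre $\pi^\inv(x)$ between a clopen $W$ and $\pi^\inv(U)$ using only zero-dimensionality of the compact totally disconnected $X_0$, which is marginally more elementary. One reading note: the paper's printed definition of irreducibility omits a negation (it should say there is \emph{no} proper closed $Y' \subseteq Y$ with $\pi``Y' = Z$); you correctly worked with the intended standard meaning, which the paper's own example and citations confirm.
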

\begin{proof}
  \begin{enumerate}[(i)]
  \item We have already quoted \cite[Theorem~6.5(d)]{Porter1988}
  which, in fact, claims that $e_\pi$ is a Boolean algebra embedding.
  It remains to see that $e_\pi$ preserves and reflects the proximity relation.
  This is done by observing:
  \begin{align*}
    a \prox b
    &\iff \aru*{x \in a} \aru*{y \in b} [x \mathrel{E} y] &\text{(definition of $\prox$ in $\clopen(X_0)$)}\\
    &\iff \aru*{x \in a} \aru*{y \in b} [\pi(x)= \pi(y)] &\text{($E$ is induced by $\pi$)}\\
    &\iff \aru{z} z \in \pi``a \cap \pi``b\\
    &\iff \pi`` a \prox \pi``b. &\text{(definition of $\prox$ in $\regular(X)$)}
  \end{align*}
\item
  It suffices to show that
  if $x \not \in F$,
  where $F \cup \{x\} \subseteq X$, and $F$ is closed,
  then
  there exists $a \in \clopen(X_0)$
  such that $F \subseteq \pi``a \not\ni x$.
  This follows from
  the disjointness of $\pi^\inv(F)$ and $\pi^\inv(x)$,
  the normality of $X_0$,
  Urysohn's Lemma,
  and the strong zero-dimensionality of $X_0$
  (see, e.g., \cite[\S~6.2]{engelking1989general}). \qedhere
\end{enumerate}
\end{proof}

In light of \ref{item:2} of the Proposition,
every irreducible continuous surjection~$\pi:(X_0, E) \twoheadrightarrow X$
inducing $E$ 
arises as a natural surjection of a pre-space.
Indeed,
there exists a isomorphism~$\phi: X_0 \to a_{\ran e_\pi}X$
such that $k \circ \phi = \pi$,
where $k: a_{\ran e_\pi}X \twoheadrightarrow X$ is the canonical surjection.

\emph{In what follows, whenever we consider a continuous surjection map~$\pi$ from a pre-space~$(X_0, E)$ whose equivalence relation is induced by $\pi$, we assume that $\pi$ is irreducible.}

\section{Covers}
\label{sec:covers}
\if 0
  \subsection*{Contact algebras}

Let $A \subseteq \regular(X)$ be a Boolean subalgebra.
Even though the usual axioms of (``normal''?) contact algebras
contain strictly $\forall \exists$ sentences,
they are satisfied by $A$ 
if they are satisfied by $\regular(X)$,
and $A$ is a basis of closed subsets
(or, equivalently, generated as a Boolean algebra by a basis).
\fi

In the foregoing section,
we have established finite covers consisting of regular closed sets
that merely touch each other as the central object of study
in the present article.
It has already been suggested there
that such covers
may be best handled at the level of contact algebras.
In this section, we devise definitions based on this idea.
We then move on to the issue of refinement of covers,
which is central in continuum theory.
We introduce the notion of patterns and other related definitions
and prove fundamental facts on covers and refinement thereof.

\subsection{Covers}
\newcommand{\covers}{\mathcal K}
\begin{defi}
  \begin{enumerate}[(i)]
  \item Let $B$ be a contact algebra.
    A \emph{finite minimal covers of $B$ that merely touch}
    is a
    finite set $C \subseteq B$ such that
    $\bigvee C = 1_B$, $a \neq b \implies a \wedge b = 0$,
    and $a \le b \implies a = b$
    for all $a, b \in B$
    (\emph{a fortiori}, members of finite minimal covers are nonzero).
    Let $\covers(B)$ be the family of
    such finite sets.
    We write $\covers(X)$ and $\covers(X_0)$
    for $\covers(\regular(X))$ and $\covers(\clopen(X_0))$
    when $X$ and $X_0 = (X_0, E)$ are a compactum and a pre-space, respectively.
  \item 
    We call compacta, pre-spaces, and contact algebras \emph{models}.
    They are exactly the mathematical objects that can be an argument of $\covers^*(-)$
    or on the left-hand side of $\cmodels$ (as defined later).
  \end{enumerate}
\end{defi}
Note that elements of $\covers(\regular(X))$ and the images of elements of $\covers(X_0)$ under $e_E$
are finite regular closed covers that merely touch
in the sense defined topologically in \S~\ref{sec:background}.
\begin{defi}
    Let $\mathfrak M$ be a model.
  \begin{enumerate}[(i)]
  \item A \emph{good tuple} is a non-repeating tuple enumerating an element of
    $\covers(\mathfrak M)$
    The set of such tuples is denoted as $\covers^*(M)$
    whereas $\covers^n(B) := \covers^*(B) \cap B^n$ for $n < \omega$.
  \item 
    A good tuple~$\overline a$ is a \emph{chain} if
    $a_i \wedge a_j \neq \0 \implies |i - j | \le 1$.
  \end{enumerate}
\end{defi}
\newcommand{\qftp}{N^1}
\newcommand{\chaingraph}{\acute}
We write $\qftp(\overline a)
\subseteq [\omega]^{\le 2}$ for the underlying graph
of the nerve
complex of a good tuple~$\overline a$.
We write $\chaingraph n$ for $\qftp(\overline a)$ for any chain of length~$n$.

\subsection{Arrangement following and refinement}
The notion of patterns is adapted from Oversteegen and Tymchatyn~\cite{oversteegen86}.

\begin{defi}
  Let $B$ be a contact algebra.
  \begin{enumerate}[(i)]
  \item Let $\overline a \in \covers^m(B)$, $\overline b \in \covers^n(B)$,
    $m \le n < \omega$.
    We say that $\overline b$ \emph{follows an arrangement}~$f: m \twoheadrightarrow n$
    in $\overline a$
    if for every $i< n$, $b_j \le a_{f(j)}$.
    If there is an arrangement that $\overline b$ follows in $\overline a$,
    then $\overline b$ \emph{refines} $\overline a$.
  \item 
    The surjection~$f$ is a \emph{pattern} if $|i - j| \le 1 \implies |f(i) - f(j)| \le 1$.
  \end{enumerate}
\end{defi}
The term \emph{pattern} is intended to be used
when a chain follows a pattern in another.

\begin{lem}\label{lem:refinement}
  Let $B$ be a contact algebra
  and
  $\overline a, \overline b \in \covers^*(B)$,
  and assume that $\overline b$ follows an arrangement~$f: m \twoheadrightarrow n$ in $\overline a$.
  \begin{enumerate}[(i)]
  \item\label{item:unique} If $b_j \wedge a_i \neq 0$, then $i = f(j)$,
    and $b_j \le a_i$.
  \item \label{item:consolidation}
    The cover~$\overline b$ is a consolidation of $\overline a$.
    That is, $a_i = \bigvee_{j \in f^\inv(i)} b_j$ for all $i < n$.
  \item Let $\overline a', \overline b' \in \covers^*(B)$,
    and suppose that $\overline b'$ follows $f$ in $\overline a'$.
    If $\qftp(\overline b) = \qftp(\overline b')$,
    then $\qftp(\overline a) = \qftp(\overline a')$.\label{item:qftp}
  \end{enumerate}
\end{lem}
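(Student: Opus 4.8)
The plan is to prove the three parts in the order given, since~\ref{item:consolidation} rests on~\ref{item:unique} and~\ref{item:qftp} on~\ref{item:consolidation}. The single point to keep in view is that the graph $\qftp(-)$ is built from the proximity relation~$\prox$ and not from the Boolean meet: the members of a cover in $\covers(B)$ merely touch, so distinct members have meet~$0$ while their contact under~$\prox$ may be nontrivial. Thus parts~\ref{item:unique} and~\ref{item:consolidation} are elementary Boolean algebra, and the contact structure enters only in~\ref{item:qftp}, where the axiom~\eqref{eq:contact vee} carries the argument.

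For~\ref{item:unique} I would use the pairwise disjointness of $\overline a$. Since $\overline b$ follows~$f$ we have $b_j \le a_{f(j)}$; if $i \neq f(j)$ then $a_i \wedge a_{f(j)} = 0$, so $b_j \wedge a_i \le a_{f(j)} \wedge a_i = 0$, against the hypothesis $b_j \wedge a_i \neq 0$. Hence $i = f(j)$, and then $b_j \le a_{f(j)} = a_i$. For~\ref{item:consolidation}, the inequality $\bigvee_{j \in f^\inv(i)} b_j \le a_i$ is immediate from $b_j \le a_{f(j)} = a_i$; for the reverse I would distribute $a_i = a_i \wedge \bigvee_j b_j = \bigvee_j (a_i \wedge b_j)$ using $\bigvee_j b_j = 1$, and discard the vanishing terms by~\ref{item:unique}, since $a_i \wedge b_j \neq 0$ forces $j \in f^\inv(i)$ and then $a_i \wedge b_j = b_j$.

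The content lies in~\ref{item:qftp}. Because $\overline b$ and $\overline b'$ follow one and the same~$f$, the covers $\overline a$ and $\overline a'$ have the same length (one member per fiber of~$f$), so $\qftp(\overline a)$ and $\qftp(\overline a')$ already share their vertex set and only edges remain to be compared. Writing $a_i = \bigvee_{j \in f^\inv(i)} b_j$ and $a_{i'} = \bigvee_{k \in f^\inv(i')} b_k$ by~\ref{item:consolidation}, I would apply the finite form of~\eqref{eq:contact vee} (obtained by induction on the size of the join, together with the symmetry of~$\prox$) to get
\[
  a_i \prox a_{i'} \iff \aru{j \in f^\inv(i)}\,\aru{k \in f^\inv(i')}\; b_j \prox b_k.
\]
For $i \neq i'$ the fibers are disjoint, so any witness has $j \neq k$ and therefore records an edge $\{j,k\} \in \qftp(\overline b)$; hence membership of the edge $\{i,i'\}$ in $\qftp(\overline a)$ is expressed purely in terms of~$f$ and $\qftp(\overline b)$. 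The identical expression computes $\qftp(\overline a')$ from~$f$ and $\qftp(\overline b')$, and since $f$ is shared while $\qftp(\overline b) = \qftp(\overline b')$, we conclude $\qftp(\overline a) = \qftp(\overline a')$. I expect the main obstacle to be bookkeeping rather than depth: one must keep $\prox$ rigorously apart from the (identically zero) Boolean meets, and observe that the diagonal case $i = i'$ contributes nothing new, as $a_i \prox a_i$ holds for every nonzero $a_i$ and merely restates the vertex set already matched.
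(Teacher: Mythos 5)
Your proposal is correct and follows essentially the same route as the paper: part~(\ref{item:unique}) from the pairwise disjointness of $\overline a$, part~(\ref{item:consolidation}) by elementary Boolean-algebra consolidation (you argue directly via distributivity over $\bigvee_j b_j = 1$, where the paper derives a contradiction from the residual $r = a_i \wedge \neg\bigvee_{j \in f^\inv(i)} b_j$ --- an immaterial difference), and part~(\ref{item:qftp}) by the finite form of axiom~(\ref{eq:contact vee}) applied to the consolidations, which the paper leaves as ``immediate'' and you usefully spell out. Your opening observation that $\qftp$ must be computed from $\prox$ rather than from the identically vanishing meets is exactly the right reading of the definition.
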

\begin{proof}
  \begin{enumerate}[(i)]
  \item
    Since $b_j \le a_{f(j)}$, we have $a_{f(j)} \wedge a_i \neq 0$.
    Hence $f(j) = i$.
    Since $\overline b$ follows $f$ in $\overline a$,
    $b_j \le a_{i'}$ for some $i'$;
    a fortiori, $b_j \wedge a_{i'} \neq 0$;
    by repeating the same argument, $i' = f(i) = i$.
  \item
    Let $\widetilde a_i$ be the right-hand side.
    That $a_i \ge \widetilde a_i$ is obvious.
    To see $a_i \le \widetilde a_i$,
    suppose not,
    and let $r = a_i \wedge \neg \widetilde a_i \neq 0$.
    Since $\overline b$ is a cover,
    there exists $j < m$ such that $b_j \wedge r \neq 0$, whence $b_j \wedge a_i \neq \0$.
    Since $\overline b$ refines $\overline a$,
    by item~\ref{item:unique},
    $j \in f^\inv(i)$, and $b_j \le a_i$.
    Hence $b_j \wedge r \le
    b_j \wedge \neg \widetilde a_i =
    b_j \wedge \bigwedge_{j' \in f^\inv(i)} \neg b_{j'} = 0$,
     a contradiction.
   \item
     This follows immediately from the preceding item
     and the axiom~(\ref{eq:contact vee}) of contact algebras.\qedhere
  \end{enumerate}
\end{proof}
Item \ref{item:unique} of the lemma
allows us to speak of \emph{the} arrangement that a
cover follows in another.
We will make use of this without specific reference to this Lemma.

Let $\overline b \in \covers^n(B)$,
and $f: m \twoheadrightarrow n$ is a surjection.
By the preceding Lemma, there exists a unique element $\covers^m(B)$
following the arrangement~$f$ in $\overline b$,
and it is a consolidation of $\overline b$.
We write $f^* \overline b$ for this.
It is easy to see that
\begin{equation}
  \label{eq:consolidation}
  (f^* \overline b)_i = \bigvee_{j \in f^\inv(i)} b_j
\end{equation}
for each $i < n$.

\begin{lem}\label{lem:undo}
  Let $\overline a \in \covers^n(B)$,
  $\overline b \in \covers^m(B)$,
  and $f: m \twoheadrightarrow n$ and $g: l \twoheadrightarrow m$
  be surjections.
  Suppose that $\overline b$ follows the arrangement~$f \circ g$ in $\overline a$.
  Then the consolidation~$g^* \overline b$ follows $f$ in $\overline a$.
\end{lem}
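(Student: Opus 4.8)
The plan is to verify the defining inequalities of ``follows'' directly, reducing the claim to the explicit description of the consolidation in~\eqref{eq:consolidation} together with the identity $(f \circ g)(j) = f(g(j))$. The element $g^* \overline b$ is already a well-defined member of $\covers^m(B)$ by the construction preceding the lemma, so the conclusion is well-posed, and by the definition of an arrangement it amounts to a single family of inequalities: that $(g^* \overline b)_i \le a_{f(i)}$ for every index~$i$ of the consolidation. This is all I would need to check.

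So I would fix such an~$i$ and expand the corresponding component via~\eqref{eq:consolidation} as $(g^* \overline b)_i = \bigvee_{j \in g^\inv(i)} b_j$. The hypothesis that $\overline b$ follows the composite arrangement $f \circ g$ in $\overline a$ unwinds, by definition, to $b_j \le a_{(f \circ g)(j)}$ for every~$j$. For each $j$ in the fibre $g^\inv(i)$ we have $g(j) = i$, hence $(f \circ g)(j) = f(g(j)) = f(i)$ and therefore $b_j \le a_{f(i)}$. Thus $a_{f(i)}$ is a common upper bound for all the elements $b_j$ appearing in the join, and taking that join yields $(g^* \overline b)_i = \bigvee_{j \in g^\inv(i)} b_j \le a_{f(i)}$, exactly as required.

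I do not expect any genuine obstacle: the statement is essentially the remark that consolidating a refinement and then reading off the induced arrangement is compatible with composing the underlying surjections. The only points needing care are bookkeeping ones---keeping the domains and codomains of $f$, $g$, and $f \circ g$ aligned so that everything is composable and lands in the correct index sets, and invoking the earlier existence-and-uniqueness result so that $g^* \overline b$ is guaranteed to be a legitimate element of $\covers(B)$ before one speaks of the arrangement it follows.
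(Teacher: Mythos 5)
Your argument is correct and is precisely the calculation that the paper's one-line proof (``by calculation using (\ref{eq:consolidation}) and Lemma~\ref{lem:refinement}.\ref{item:consolidation}'') leaves implicit: you expand $(g^* \overline b)_i$ via (\ref{eq:consolidation}) and bound each $b_j$ with $j \in g^{-1}(i)$ by $a_{f(i)}$ using the hypothesis $b_j \le a_{(f \circ g)(j)}$. Your deferral to the construction preceding the lemma for the well-definedness of $g^* \overline b$ likewise matches the paper's reliance on Lemma~\ref{lem:refinement}.\ref{item:consolidation}, so no further justification is needed.
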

\begin{proof}
  By calculation using (\ref{eq:consolidation}) and
  Lemma~\ref{lem:refinement}.\ref{item:consolidation}.
\end{proof}

We conclude the section by stating
the following elementary fact on chains without a proof.
\begin{lem}\label{lem:obvious}
  Let $B$ be a contact algebra.
  \begin{enumerate}[(i)]
  \item \label{item:automatically a pattern}
    If $\overline a, \overline b \in \covers^*(B)$ are chains,
    and $\overline b$ follows $f: |\overline b| \twoheadrightarrow |\overline a|$ in $\overline a$,
    then $f$ is a pattern.
  \item \label{item:pullback of a chain by a pattern is a chain}
    If $\overline a \in \covers^*(B)$ is a chain,
    and $f: |\overline a| \twoheadrightarrow n$ is a pattern,
    then $f^*\overline a$ is a chain.
  \end{enumerate}
\end{lem}

\section{Cologic}\label{sec:basics-cologic}
\newcommand{\trivial}{1}
\subsection{Basics}
We introduce the formal system of cologic.
To make the definitions uniform between Kruckman's cologic of pre-spaces
and our cologic of compacta, the formalism of contact algebras is exploited.

Formally, variables in cologic are natural numbers,
and \emph{contexts} are finite initial segments of $\omega$.
In practice, we will be lax about what can be a context and allow any finite set
with a suitable bijection onto a natural number implicit.
For instance, we say that $x$ is a variable
and that $\{x, y\}$ is a context, etc.
We will sometimes extend this convention to the index sets of tuples
as demonstrated in the proof below.
\begin{lem}\label{lem:directed}
  Let $X$ be a compactum, and $\overline a, \overline b \in \covers^*(X)$.
  Then there exists a common refinement of $\overline a$ and $\overline b$
  in $\covers^*(X)$.
\end{lem}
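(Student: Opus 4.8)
The plan is to build the common refinement by the standard Boolean meet construction and then check that it lands in $\covers^*(X)$. The first observation, which makes the whole lemma routine, is that in the Boolean algebra $\regular(X)$ an element of $\covers(X)$ is nothing more than a finite partition of $1 = X$ into nonzero pieces: the clauses $\bigvee C = 1$ and $a \wedge b = 0$ for distinct $a,b \in C$ say exactly this, and the minimality clause $a \le b \implies a = b$ is then automatic, since $a \le b$ with $a \neq b$ would give $a = a \wedge b = 0$, contradicting nonzeroness. So I only need to produce a finite partition of $X$ refining both given partitions.

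Writing $\overline a = (a_i)_{i<m}$ and $\overline b = (b_j)_{j<n}$, I set $c_{ij} = a_i \wedge b_j$ and take $\overline c$ to be a non-repeating enumeration of $C = \{\, c_{ij} : i<m,\ j<n,\ c_{ij} \neq 0 \,\}$. Three things must be verified: that $C \in \covers(X)$, and that $\overline c$ refines each of $\overline a$ and $\overline b$ in the sense of the refinement definition.

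That $C \in \covers(X)$ is pure distributivity in $\regular(X)$. The covering property is $\bigvee_{i,j} a_i \wedge b_j = \bigl(\bigvee_i a_i\bigr) \wedge \bigl(\bigvee_j b_j\bigr) = 1 \wedge 1 = 1$. The merely-touch property holds because whenever $(i,j) \neq (i',j')$ we have $i \neq i'$ or $j \neq j'$, so $c_{ij} \wedge c_{i'j'}$ absorbs one of $a_i \wedge a_{i'} = 0$ or $b_j \wedge b_{j'} = 0$ and hence is $0$; this same computation shows the nonzero $c_{ij}$ are pairwise distinct (two equal nonzero elements would have nonzero meet), so $C$ really is enumerated without repetition and $\overline c$ is a good tuple, with minimality automatic as above.

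For refinement, each $c_{ij} \le a_i$ and $c_{ij} \le b_j$ by definition of the meet, so the coordinate projections $c_{ij} \mapsto i$ and $c_{ij} \mapsto j$, read off the chosen enumeration of $C$, are the candidate arrangements witnessing that $\overline c$ refines $\overline a$ and $\overline b$ respectively. The main (and essentially only) point needing care is that these projections are genuinely surjective onto the full index sets, so that they are arrangements in the required sense: for each fixed $i$ we have $a_i = a_i \wedge \bigvee_j b_j = \bigvee_j c_{ij}$, and since $a_i \neq 0$ some $c_{ij} \neq 0$, so the index $i$ is attained; symmetrically every $j$ is attained. I expect the only real obstacle to be this bookkeeping—confirming that discarding the zero meets still leaves a cover projecting onto every coordinate of $\overline a$ and of $\overline b$—while everything else is distributivity in $\regular(X)$; in particular the proximity structure plays no role in the existence of a common refinement, so the argument is in fact valid over any contact algebra, not just over a compactum.
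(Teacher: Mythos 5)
Your proof is correct and takes essentially the same route as the paper's: the paper likewise indexes the tuple $\overline c$ by the set of pairs $(i,j)$ with $a_i \wedge b_j \neq 0$ and invokes distributivity to get a cover refining both. You simply make explicit the bookkeeping the paper declares clear (surjectivity of the two projection arrangements, pairwise distinctness, and the automatic minimality), and your closing remark that the argument is purely Boolean-algebraic and valid in any contact algebra is accurate.
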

\begin{proof}
  Let $I = \{(i, j) \in |\overline a| \times |\overline b| \mid a_i \wedge b_j \neq 0\}$.
  Define a tuple~$\overline c$ indexed by $I$ by
  $c_{(i, j)} = a_i \wedge b_j$.
  By the distribitive law of Boolean algebras, $\overline c$ is a cover.
  By definition, $\overline c$ refines both $\overline a$ and $\overline b$.
  It is clear that $\overline c$ merely touches.
\end{proof}
For each context~$n$, \emph{formulas} (of cologic) in context~$n$,
or \emph{$n$-formulas},
are generated by the following grammar:
\[
  \phi^n ::= \bot \mid G \mid \neg \phi^n \mid \phi^n \vee \phi^n \mid \caru{f}\phi^m
\]
where $G$ is a finite graph whose vertex set is $X$,
and $f: m \twoheadrightarrow n$ is  a surjection.
%We call $\caru{\sqsupset}$ occurring in any formula of cologic a \emph{diamond}.
As usual, $\top$, $\land$, $\to$, and $\czenbu{f}$ are introduced as shorthand
(so, e.g., $\czenbu{f}\phi := \neg \caru{f} \neg \phi$).
A formula in the singleton context~$1 = \{0\}$ is a \emph{sentence},
and, as usual, a \emph{theory} in cologic is a set of sentences of cologic.

Let $B$ be an arbitrary contact algebra.
We define the satisfaction relation $B, \overline a \cmodels \phi$ of cologic
for good tuples $\overline a \in \covers^n(B)$ with the index set~$n$,
and $n$-formulas $\phi$ for each context~$n$.
This is done by recursion:
\begin{itemize}
\item
  $M, \overline a \cmodels \caru{f}\phi_0$
  if and only if
  there exists a good tuple $\overline b$ in $M$
  following the arrangement~$f$ in  $\overline a$
  such that $M, \overline b \cmodels \phi_0$;
\item
  $M, \overline a \cmodels G$
  if and only if
  $G = \qftp(\overline a)$; and
\item
  we have the obvious conditions corresponding to the Boolean connectives. 
\end{itemize}
For a sentence $\phi$, we define $B \cmodels \phi$ if and only if
$B, \trivial^B \cmodels \phi$, where $\trivial^B$ is the singleton tuple~$1$
which is good. 
The (full) cological theory $\Th_\cmodels(B)$ is
the set of sentences of cologic $\{\phi \mid B \cmodels \phi\}$.
For a compactum $X$,
by convention $X, \overline a \cmodels \phi \iff \regular(X), \overline a \cmodels \phi$.
For a pre-space $X_0 = (X_0, E)$,
by convention $X_0, \overline a \cmodels \phi \iff \clopen(X_0), \overline a \cmodels \phi
\iff e_E(\clopen(X_0)), e_E(\overline a) \cmodels \phi$.

\begin{thm}[Compactness]
  Let $T$ be a theory of cologic.
  If every finite subset $T'$ of $T$ is \emph{satisfiable},
  i.e.,
  there exists a \emph{pre-space}~$X_0$ with $X_0 \cmodels T$,
  then so is $T$.
\end{thm}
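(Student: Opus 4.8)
The plan is to deduce this from the compactness theorem for ordinary first-order logic by regarding contact algebras as first-order structures in the language $L=\{0,1,\vee,\wedge,\neg,\delta\}$ and interpreting cologic in $L$. First I would define, by recursion on a cological formula $\phi$ in a context $n$, an $L$-formula $\phi^{*}(x_0,\dots,x_{n-1})$ so that for every contact algebra $B$ and every good tuple $\overline a\in\covers^{n}(B)$ one has $B,\overline a\cmodels\phi\iff B\models\phi^{*}(\overline a)$. The Boolean connectives are copied verbatim; an atomic graph $G$ translates to the quantifier-free formula $\bigwedge_{ij\in G}x_i\,\delta\,x_j\wedge\bigwedge_{ij\notin G}\neg(x_i\,\delta\,x_j)$, which says $G=\qftp(\overline a)$; and $\caru{f}\phi_0$ with $f\colon m\twoheadrightarrow n$ translates to $\exists y_0\cdots\exists y_{m-1}\,\bigl(\mathrm{good}(\overline y)\wedge\bigwedge_{j<m}y_j\le x_{f(j)}\wedge\phi_0^{*}(\overline y)\bigr)$, where $\mathrm{good}(\overline y)$ is the quantifier-free assertion that the $y_j$ are nonzero with pairwise meet $0$ and join $1$. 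By Lemma~\ref{lem:refinement} this expresses exactly that some good $\overline b$ following $f$ in $\overline a$ satisfies $\phi_0$. Each $\phi$ is a finite syntactic object, so $\phi^{*}$ is a genuine finitary $L$-formula; for a sentence $\phi$ I set $\phi^{*}:=\phi^{*}(1)$, giving $B\cmodels\phi\iff B\models\phi^{*}$.

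The crux is to identify the $L$-structures that arise as $\clopen(X_0)$ for a pre-space $X_0$ and to see that they form an elementary class. Let $\Sigma$ be the first-order theory comprising the Boolean algebra axioms, the contact axioms of \S\ref{sec:background} with symmetry of $\delta$, and the normality axiom
\[
\forall a\,\forall c\,\bigl(a\centernot\delta c\to\exists b_1\,\exists b_2\,(a\centernot\delta b_1\wedge c\centernot\delta b_2\wedge b_1\vee b_2=1)\bigr).
\]
I claim $\Sigma$ axiomatizes the pre-space algebras. Given $B\models\Sigma$, equip the Stone space $S(B)$ with the closed relation $E=\{(x,y):a\,\delta\,b\text{ for all }a\in x,\ b\in y\}$; reflexivity and symmetry follow from the contact axioms. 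Normality gives transitivity: if $xEy$, $yEz$ and some $a\in x$, $c\in z$ had $a\centernot\delta c$, choose $b_1,b_2$ as in the axiom; then $b_1\vee b_2=1\in y$ forces $b_1\in y$ or $b_2\in y$, and either case contradicts $xEy$ respectively $yEz$. Hence $E$ is a closed equivalence relation, so $X_0:=(S(B),E)$ is a pre-space, and by the Stone duality for normal contact algebras (see~\cite{Koppelberg2012}) $\delta$ is recovered from $E$, giving $\clopen(X_0)\cong B$ as contact algebras. Conversely every pre-space algebra, corresponding to a closed equivalence relation on a Stone space, satisfies $\Sigma$: the nontrivial clause, normality, follows by separating, in the compact Hausdorff quotient, the disjoint closed images of two non-touching clopens and pulling the separation back using the zero-dimensionality of $S(B)$.

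With these pieces the argument closes by ordinary compactness. Put $T^{*}=\{\phi^{*}:\phi\in T\}$. A finite subset of $T^{*}\cup\Sigma$ involves only finitely many $\phi\in T$, i.e.\ a finite $T'\subseteq T$; by hypothesis some pre-space $X_0$ satisfies $X_0\cmodels T'$, so its algebra $\clopen(X_0)$ models $(T')^{*}\cup\Sigma$. Thus $T^{*}\cup\Sigma$ is finitely satisfiable, and the first-order compactness theorem produces a model $B$. Since $B\models\Sigma$, the preceding paragraph gives a pre-space $X_0$ with $\clopen(X_0)\cong B$; since $B\models T^{*}$, for every $\phi\in T$ we have $X_0\cmodels\phi\iff\clopen(X_0)\cmodels\phi\iff B\models\phi^{*}$, so $X_0\cmodels T$.

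I expect the main obstacle to be the second paragraph, specifically the equivalence between normality of $\delta$ and the induced relation $E$ being an equivalence relation, together with the recovery of $\delta$ from $E$ for normal algebras; this is what guarantees that a model of the abstract theory $\Sigma$ is honestly a pre-space algebra rather than an arbitrary contact algebra. The translation of the first paragraph and the compactness deduction of the third are routine once this duality is in place.
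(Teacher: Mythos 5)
Your proposal is correct and follows the same route as the paper's own (sketched) proof, namely its second alternative: translate cologic into first-order logic over contact algebras, apply first-order compactness, and recover a pre-space via the duality $S(-)$/$\clopen(-)$. In fact you supply the one nontrivial detail the paper's sketch glosses over with ``whose axioms are elementary''---an explicit elementary axiomatization $\Sigma$ of the pre-space algebras, with the normality axiom correctly identified as what makes the induced relation $E$ on $S(B)$ transitive (note the paper's own definition of $E$ contains a typo: the quantifiers must be universal, exactly as in your formulation, for the correspondence to work).
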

\begin{proof}[Proof sketch]
  This can be proved in two ways.
  Kruckman~\cite{kruckman17:_first_order_logic_local_finit} has
  a proof system complete for his cologic, from which
  the compactness theorem follows in the usual manner.
  Alternatively,
  we may use the compactness theorem of first-order logic:
  it is clear that our cologic can be translated into
  first-order logic in the language of contact algebras,
  whose axioms are elementary,
  and a pre-space satisfying the given theory of cologic
  is obtained via the duality, given by $S(-)$ and $\clopen(-)$,
  of pre-spaces and contact algebras
  as described in \S~\ref{sec:background}.
\end{proof}
The foregoing compactness theorem gives us pre-spaces
and not compacta \emph{per se}.
Nor is there a guarantee that those pre-spaces have canonical surjections
that are irreducible.
We have to bite the bullet for now and live with these facts.
On the other hand,
as we will see later,
a categoricity result of some sort (Corollary~\ref{cor:uniqueness})
states that, for some compactum~$X$,
pre-spaces~$(X_0, E)$ satisfying some mild condition
with (irreducible) continuous surjections onto $X$
inducing $E$ are actually unique
and suggests that there is a strong connection between
cologic of compacta and cologic of pre-spaces, the latter of which \emph{is} compact.

The following notion, which is the counterpart of elementary substructures,
will be of importance in \S~\ref{sec:appl-pseudo-arc}.
\begin{defi}
  For contact algebras $A \subseteq B$,
  $A$ is a \emph{elementary substructure} of $B$%
  \footnote{
    Perhaps it is better to say that $\covers(A)$ is an elementary substructure
    of $\covers(B)$ or that $\covers^*(A)$ is an elementary substructure of $\covers^*(B)$.}
  if 
  $A, \overline a \cmodels \phi \iff B, \overline a \cmodels \phi$
  for $n < \omega$,
  $\overline a \in \covers^n(A)$, and an $n$-formula~$\phi$.
\end{defi}

\begin{defi}\label{defi:back-and-forth}
  Let $A$ and $B$ be contact algebras.
  A \emph{back-and-forth system}~$I$ from $A$ to $B$
  is a set of pairs of tuples satisfying the following:
  \begin{enumerate}[(i)]
  \item if $(\overline a, \overline a')$,
    then $|\overline a| = |\overline a'| =: n$,
    $(\overline a, \overline a') \in \covers^n(A) \times \covers^n(B)$,
    and $\qftp(\overline a) = \qftp(\overline a')$;
  \item if $(\overline a, \overline a')$,
    and $\overline b \in \covers^*(A)$ follows $f$ in $\overline a$,
    then there exists
    $\overline b' \in \covers^*(B)$ following $f$ in $\overline a'$
    such that $(\overline b, \overline b') \in I$; and
    \label{item:forth}
  \item if $(\overline a, \overline a')$,
    and $\overline b' \in \covers^*(B)$ follows $f$ in $\overline a'$,
    then there exists
    $\overline b \in \covers^*(A)$ following $f$ in $\overline a$
    such that $(\overline b, \overline b') \in I$.
    \label{item:back}
  \end{enumerate}
\end{defi}

\begin{prop}\label{prop:back and forth}
  Let $A, B$ be contact algebras.
  If $I$ is a back-and-forth system from $A$ to $B$,
  and $(\overline a, \overline a') \in I$,
  then $A, \overline a \cmodels \phi \iff B, \overline a' \cmodels \phi$
  for all cological formula~$\phi$ in context~$|\overline a| = |\overline a'|$.
  In particular,
  if $A \subseteq B$,
  and
  $I$ contains (the graph of) the embedding~%
  $\covers^n(A) \hookrightarrow \covers^n(B)$ for each $n$,
  then $A$ is an elementary substructure of $B$.
\end{prop}
\begin{proof}
  This is proved in the same way as the usual analysis of Ehrenfeucht-\Fr{} games.
\end{proof}
In the case of first-order logic,
the requirement that the Ehrenfeucht-\Fr{} game has a uniform winning strategy for playing forever is too strong for elementary equivalence.
Likewise, the sufficient condition in Proposition~\ref{prop:back and forth}
is too strong.
Nevertheless, Proposition~\ref{prop:back and forth} is good enough for
our application in \S~\ref{sec:appl-pseudo-arc}.

\subsection{Analog of countable model theory}
We develop the counterpart of countable model theory,
where types over the empty set play an important role.
We do not define or use the notion of cological types over a nonempty
set of parameters,
so a cological type \emph{simpliciter}
is what would otherwise be called a cological type over $\emptyset$.
Some counterparts of the ordinary concepts on types in first-order logic
would pose issues in the absence of compactness theorem,
but note that the following definitions still make sense.
\begin{defi}
  Let $\mathfrak M$ be a model.
  \begin{enumerate}[(i)]
  \item Let $\overline a \in \covers^n(\mathfrak M)$ be a good tuple.
    The set of $n$-formulas of cologic
    \[
      \tp_\cmodels := \{\phi \mid \mathfrak M, \overline a \cmodels \phi\}
    \]
    is the \emph{type realized} by $\overline a$ in $\mathfrak M$.
    Such a type is an \emph{$n$-type}.
  \item Let $p$ be an $n$-type realized by some good tuple in $\mathfrak M$.
    The type~$p$ is \emph{generated} by  an $n$-formula~$\phi$ if
    for every $\psi \in p$
    and every $\overline a \in \covers^n(\mathfrak M)$,
    we have $M, \overline a \cmodels \phi \to \psi$
    (the last part is equivalent to $M \cmodels \czenbu{n \times 1}(\phi \to \psi)$,
    where $n \times 1$ is the unique surjection~$n \twoheadrightarrow 1$).
    A type generated by some formula is \emph{principal}.
  \end{enumerate}
\end{defi}

%directed---cofinal;
%codirected--coinitial

\begin{defi}\label{defi:cofat}
  A model~$\mathfrak M$ is \emph{cofinally atomic}
  if there exists a set~$F$ of good tuples in $\covers^*(\mathfrak M)$
  with the following properties:
  \begin{enumerate}[(i)]
  \item \label{item:closed eqv}
    $F$ is closed under cological equivalence of good tuples;
  \item \label{item:directed}
    the refinement relation on $F$ is directed;
  \item \label{item:cofinal}
    $F$ is cofinal in $\covers^*(\mathfrak M)$; and
  \item  \label{item:ppl}
    every $\overline a \in \mathcal F$ realizes a principal cological type.
  \end{enumerate}
\end{defi}

The following is the promised analog of Vaught's theorem on
homogeneity of countable atomic structures.
\begin{thm}\label{thm:vaught}
  Let $X$ be an infinite compactum.
  Suppose that it is cofinally atomic.
  Then for $\overline a^i \in \covers^*(X)$ ($i<2$),
  if they realize the same cological type,
  then
  there exists an autohomeomorphism $\sigma: X \to X$
  that maps $\overline a^1$ to $\overline a^0$.
\end{thm}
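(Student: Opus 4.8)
The plan is to run a back-and-forth construction on covers, \emph{dual} to Vaught's original argument (which enumerates the elements of a countable atomic model), then to assemble the outcome into an isomorphism of two towers of finite nerves, and finally to descend this isomorphism through the canonical quotient maps to produce an autohomeomorphism of $X$. Throughout, fix a set $F$ witnessing cofinal atomicity as in Definition~\ref{defi:cofat}. Since $X$ is metrizable it is second countable, so I can fix a \emph{countable} family $\mathcal D \subseteq \regular(X)$ that is a basis of closed sets, with $0 \neq d \neq 1$ for each $d \in \mathcal D$; I will use the two-element covers $\{d, \neg d\} \in \covers^*(X)$ as refinement \emph{targets}. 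Note that $X$ being infinite guarantees such nontrivial covers exist.

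The construction maintains, at each stage $k$, good tuples $\overline c^0_k, \overline c^1_k$ of a common length $N_k$ that realize the \emph{same} cological type $p_k$ and that follow a common arrangement $f_{k-1}\colon N_k \twoheadrightarrow N_{k-1}$ in their predecessors; I set $\overline c^0_0 = \overline a^0$, $\overline c^1_0 = \overline a^1$, and arrange $\overline c^\varepsilon_k \in F$ for $k \ge 1$. The engine, say on the $0$-side (the $1$-side is symmetric): to refine past a target $\overline d$, use Lemma~\ref{lem:directed} to form a common refinement of $\overline c^0_k$ and $\overline d$, and then cofinality and directedness of $F$ (properties \ref{item:cofinal} and \ref{item:directed} of Definition~\ref{defi:cofat}) to land on some $\overline c^0_{k+1} \in F$ refining both; let $f_k$ be the arrangement it follows in $\overline c^0_k$. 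By property~\ref{item:ppl} the type $p_{k+1}$ of $\overline c^0_{k+1}$ is generated by some $\psi$, so $\overline c^0_k \cmodels \caru{f_k}\psi$, and since $\overline c^0_k, \overline c^1_k$ realize the same type also $\overline c^1_k \cmodels \caru{f_k}\psi$; a witness $\overline c^1_{k+1}$ then follows the \emph{same} $f_k$ in $\overline c^1_k$ and satisfies the generator $\psi$, hence realizes $p_{k+1}$, and closure of $F$ under cological equivalence (property~\ref{item:closed eqv}) puts it in $F$. Interleaving forth- and back-steps, I drive the bookkeeping so that every $d \in \mathcal D$ is eventually absorbed on both sides, i.e. $d \in \langle \overline c^\varepsilon_k\rangle$ for some $k$ and each $\varepsilon < 2$.

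For the descent, write $\mathcal B^\varepsilon = \langle \bigcup_k \overline c^\varepsilon_k\rangle \subseteq \regular(X)$. Because the pieces merely touch, Stone duality makes $([c^\varepsilon_{k,i}])_{i < N_k}$ a clopen partition of $S(\mathcal B^\varepsilon)$; by Lemma~\ref{lem:refinement}.\ref{item:consolidation} these partitions refine along $f_k$, so $\mathcal B^\varepsilon$ is the increasing union of the finite subalgebras $\langle \overline c^\varepsilon_k\rangle$ and depends, as a Boolean algebra, only on the combinatorial data $(N_k, f_k)_k$. Since the two index-towers coincide, $c^0_{k,i} \mapsto c^1_{k,i}$ extends to a Boolean isomorphism $\theta\colon \mathcal B^0 \to \mathcal B^1$. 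As every element of $\mathcal B^\varepsilon$ is a join of pieces at a common level, and $\qftp(\overline c^0_k) = \qftp(\overline c^1_k)$ at every level (same type), axiom~(\ref{eq:contact vee}) shows $\theta$ preserves and reflects $\prox$, so it is an isomorphism of contact algebras. Each $\mathcal B^\varepsilon \supseteq \mathcal D$ is a basis of closed sets, whence the background construction gives $S(\mathcal B^\varepsilon) = a_{\mathcal B^\varepsilon}X$ with irreducible canonical surjection $\pi^\varepsilon$ inducing $E^\varepsilon$ and $|a_{\mathcal B^\varepsilon}X| \cong X$. Dualizing $\theta$ yields a pre-space isomorphism that respects $E^0, E^1$, and hence descends through $\pi^0, \pi^1$ to a homeomorphism $X \to X$ with $\pi^0``[a^0_i] = a^0_i \mapsto \pi^1``[a^1_i] = a^1_i$; the required $\sigma$ is its inverse.

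The main obstacle is precisely this descent: unlike classical Vaught, there is no compactness to lean on and the ``models'' here are covers living in auxiliary Stone spaces, so the back-and-forth must first be packaged as a contact-algebra isomorphism $\theta$ and then pushed down through the non-injective quotient maps. This succeeds exactly because the cological type controls the nerve graph $\qftp$ at \emph{every} level while ``merely touch'' turns the cover pieces into clopen partitions, so the proximity relation, and therefore $E$ and the quotient $X$, are determined by the combinatorial data that the construction keeps synchronized. A secondary point requiring care is that one should not aim for $(\overline c^\varepsilon_k)_k$ to be cofinal in the possibly uncountable $\covers^*(X)$; it suffices, and is achievable with countably many absorption steps, that each $\mathcal B^\varepsilon$ merely contain the countable basis $\mathcal D$, which is what metrizability provides.
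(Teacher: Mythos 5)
Your proposal is correct and follows essentially the same route as the paper's proof: the identical back-and-forth engine (use directedness and cofinality of $F$ to refine past two-element covers $(d,\neg d)$ drawn from a countable closed-set basis, take a generator $\phi$ of the principal type of the refinement, transfer $\caru{f}\phi$ across via the same-type hypothesis to get a same-arrangement, same-type witness on the other side, and use closure of $F$ under cological equivalence to stay in $F$), followed by the same contact-algebra duality descent to an autohomeomorphism. The only difference is packaging---the paper enumerates a single countable basis subalgebra $B$ and assembles the pairs into one automorphism $s\colon B \to B$, whereas you build synchronized towers generating two basis subalgebras and an isomorphism $\theta\colon \mathcal B^0 \to \mathcal B^1$ between them---which is an equally valid variant, since an isomorphism between two closed-set bases dualizes to an autohomeomorphism of $X$ just as well.
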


\begin{proof}
  Take a set~$F$ of good tuples witnessing the cofinal atomicity of $X$.

  We build, for each $i<2$,
  a sequence~$(\overline c^{i, j})_{j<\omega}$ of good tuples in $F$,
  such that $\overline c^{0, j}$ and $\overline c^{1, j}$
  realize the same cological type.
  We will also build an increasing sequence~$(s_j)_{j<\omega}$ of
  finite functions
  whose union will be a contact algebra automorphism~$s: B \to B$,
  where $B$ is a subalgebra of $\regular(X)$ and is a basis of
  closed sets.
  Of course, $|\dom s | = |\ran s| = \aleph_0$.
  We also implicitly build an enumeration~$(b^j)_{j<\omega}$ of $B$
  such that $b^j \in \langle \dom s_{j-1} \cup \ran s_{j-1}\rangle \cup B_0$,
  where $B_0$ is a fixed countable basis of closed sets of $X$;
  this can be done by extending the enumeration by finitely many elements
  every time we define $s_j$.

  We first construct $\overline c^{0,0}$ and $\overline c^{1, 0}$.
  Use the cofinality of $F$ to find $\overline c^{0,0}$
  following the arrangement~$f_0$ in $\overline a^0$.
  By Definition~\ref{defi:cofat}.\ref{item:ppl},
  the cological type of $\overline c^{0,0}$ is generated
  by, say, $\phi_0$.
  Since $X, \overline a^0 \cmodels \caru{f_0} \phi_0$,
  we have $X, \overline a^1 \cmodels \caru{f_0} \phi_0$ by hypothesis,
  whence there exists $c^{1,0}$ following the arrangement~$f_0$ in $\overline a^1$
  such that $X, c^{1, 0} \cmodels \phi_0$.
  Since $\phi_0$ generates the cological type of $\overline c^{0,0}$,
  the good tuple~$c^{1, 0}$ realizes it,
  whence $c^{1, 0} \in F$
  by Definition~\ref{defi:cofat}.\ref{item:closed eqv}.
  Finally, define $s^0 := \0$.

  Let $j' > 0$,
  and suppose that we have constructed $\overline c^{i,0}, \dots, \overline c^{i, j'-1}$
  ($i<2$)
  and $s^0,\dots, s^{j'-1}$.
  We construct $\overline c^{i, j'}$ ($i < 2$) and $s^{j'}$ as follows.
  Let $j = \lfloor j'/2 \rfloor$ and $i = j' - 2j$.
  Consider the good tuple $(b^j, \neg b^j)$,
  where $\neg$ is the Boolean complementation in $\regular(X)$.
  One may take a common refinement~$\overline a^{i,j}$
  of that tuple and $\overline a^{i, j-1}$
  by Definition~\ref{defi:cofat}.\ref{item:directed} and
  \ref{item:cofinal};
  let $f_j, g_j$ be the arrangement followed by $\overline a^{i, j}$ in $\overline a^{i, j-1}$ and in $(b^j, \neg b^j)$,
  respectively.
  The cological type of $\overline a^{i,j}$ is principal
  by Definition~\ref{defi:cofat}.\ref{item:ppl}, so that
  it is generated by, say, $\phi_j$.
  Now, $X, \overline a^{i,j-1} \cmodels \caru{f_j}\phi_j$.
  Since $\overline a^{i,j-1}$ realizes the same type as
  $\overline a^{1-i, j-1}$ by induction,
  it follows that  $X, \overline a^{1-i,j-1} \cmodels \caru{f_j}\phi_j$.
  Take $a^{1-i, j}$ witnessing the existential formula on the right-hand side,
  which, as before, realizes the same type as $\overline a^{i, j}$
  and thus in $F$.
  Let $b'^{j}$ be the first component of $g_j^* a^{1-i, j}$.
  We define
  \[
    s^{j'} = s^{j'-1} \cup
    \begin{cases}
      (b^j, b'^j), & (i = 0)\\
      (b'^j, b^j). & (i = 1)
    \end{cases}
  \]

  It is easy to see that the bijection $s: B \to B$
  is a contact algebra isomorphism.
  By construction, the image of $\overline a^0$ under $s$ is $\overline a^1$.
  By the duality between contact algebras and compacta,
  $s$ induces an autohomeomorphism $\sigma: X \to X$
  determined by $\sigma(x) = \bigcap s^{\inv}(\{b \in B \mid x \in b\})$ for $x \in X$.
\end{proof}

A similar argument shows the following:
\begin{thm}\label{thm:uniqueness}
  Let $X$ be an infinite compactum
  and $(X_i, E_i)$ be second-countable pre-spaces $(i < 2)$.
  Suppose that there exists a continuous surjection~%
  $X_i \twoheadrightarrow X$ that induces $E_i$ for $i < 2$.
  If $(X_i, E_i)$ ($i<2$) have the same cological theory
  and are both cofinally atomic,
  then
  there exists an isomorphism~$(X_0, E_0) \to (X_1, E_1)$
  that induces an autohomeomorphism on $X$.
\end{thm}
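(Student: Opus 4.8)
The plan is to reduce the statement to a back-and-forth construction at the level of contact algebras and then dualize. By the standing convention, each surjection $\pi_i\colon X_i\twoheadrightarrow X$ inducing $E_i$ is irreducible, so by the Proposition it gives a contact algebra embedding $e_{\pi_i}\colon\clopen(X_i)\hookrightarrow\regular(X)$ whose image $B_i:=\ran e_{\pi_i}$ is a basis of closed sets of $X$. Since $X_i$ is second-countable, its algebra of clopen sets, and hence $B_i$, is countable. Unwinding the conventions defining $\cmodels$ for pre-spaces, the hypotheses assert exactly that $\Th_\cmodels(B_0)=\Th_\cmodels(B_1)$ and that $B_0$ and $B_1$ are both cofinally atomic; fix families $F_0,F_1$ witnessing this. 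The goal then reduces to producing a contact algebra isomorphism $s\colon B_0\to B_1$, since the duality given by $S(-)$ and $\clopen(-)$ will transport such an $s$ to a pre-space isomorphism $(X_0,E_0)\to(X_1,E_1)$.

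I would build $s$ by the same back-and-forth as in the proof of Theorem~\ref{thm:vaught}, now carried out between the two algebras rather than within one. Enumerate $B_0=\{p^j\}_{j<\omega}$ and $B_1=\{q^j\}_{j<\omega}$; second-countability is what makes these enumerations possible. The only real change occurs at the base step: in place of two given tuples realizing a common type, start from the trivial singletons $\trivial^{B_0}$ and $\trivial^{B_1}$, which realize the same cological type precisely because $\Th_\cmodels(B_0)=\Th_\cmodels(B_1)$. At an even (``forth'') stage I force $p^j$ into the domain by taking, inside $F_0$ and using Definition~\ref{defi:cofat}.\ref{item:directed} and \ref{item:cofinal}, a common refinement of the previous tuple and of $(p^j,\neg p^j)$; I read off the formula $\phi_j$ generating its principal type via Definition~\ref{defi:cofat}.\ref{item:ppl}, transfer through $\caru{f_j}\phi_j$ to $B_1$ exactly as in Theorem~\ref{thm:vaught} (the witness realizes the same type, hence lies in $F_1$ by Definition~\ref{defi:cofat}.\ref{item:closed eqv}), and set the value of $s$ on $p^j$ to be the matching component of the relevant consolidation $g_j^*(\cdot)$. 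Odd (``back'') stages run the symmetric step beginning with $q^j\in B_1$ and determine $s^{\inv}(q^j)$. Since $g_j^*$ applied to the $B_0$-tuple recovers $(p^j,\neg p^j)$ by Lemma~\ref{lem:refinement}.\ref{item:consolidation}, and matched tuples share their nerve graphs so that Lemma~\ref{lem:refinement}.\ref{item:qftp} applies, the successive finite maps are consistent; the two enumerations guarantee that their union $s$ is a bijection $B_0\to B_1$, and compatibility of the consolidations makes it a contact algebra isomorphism.

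Finally I would promote $s$ to the data required by the theorem. Dualizing, $s$ yields a homeomorphism $h\colon X_0\to X_1$ with $x\mathbin{E_0}y\iff h(x)\mathbin{E_1}h(y)$. The essential remaining point, and the step I expect to be the main obstacle, is that $h$ descends to a genuine \emph{auto}homeomorphism of $X$ rather than merely to a homeomorphism between two abstract quotients. This is exactly where it matters that $B_0$ and $B_1$ are bases of closed sets of the \emph{same} space $X$: both natural quotients are canonically identified with $X$ via $u\mapsto\bigcap u$, and because $E_i$ is precisely the fibre relation of the canonical surjection $k_i\colon a_{B_i}X\to X$, the pre-space isomorphism, respecting $E_0$ and $E_1$, must carry fibres of $k_0$ to fibres of $k_1$ and so descends to a self-map of $X$. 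Concretely this autohomeomorphism is $\sigma(x)=\bigcap s^{\inv}(\{b\in B_1\mid x\in b\})$, the exact analogue of the formula closing the proof of Theorem~\ref{thm:vaught}; the intersection is a single point because $\bigcap\{b\in B_1\mid x\in b\}=\{x\}$, as $B_1$ is a basis of closed sets, and $s^{\inv}$, being an isomorphism onto another basis of closed sets, again sends this point-determining family to one with singleton intersection. Verifying $\pi_1\circ h=\sigma\circ\pi_0$ is then routine, which completes the proof.
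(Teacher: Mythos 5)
Your proposal is correct and is essentially the paper's intended argument: the paper proves Theorem~\ref{thm:uniqueness} only by remarking that ``a similar argument'' to the back-and-forth in Theorem~\ref{thm:vaught} applies, and you have spelled out exactly that adaptation---running the construction between the two countable algebras $e_{\pi_0}(\clopen(X_0))$ and $e_{\pi_1}(\clopen(X_1))$, seeding it with the trivial tuples (whose types agree since $\Th_\cmodels(X_0)=\Th_\cmodels(X_1)$), and dualizing to a pre-space isomorphism descending to an autohomeomorphism of $X$. Your closing formula $\sigma(x)=\bigcap s^{\inv}(\{b\in B_1\mid x\in b\})$ matches the one ending the proof of Theorem~\ref{thm:vaught}, so nothing further is needed.
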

\begin{cor}
  Let $X$ be a cofinally atomic infinite compactum
  and $B_i$ be a countable contact algebra that is a basis of closed sets.
  If $B_i$ ($i<2$) are both elementary substructures of $\regular(X)$,
  then there exists an autohomeomorphism inducing an isomorphism $B_0 \to B_1$.
\end{cor}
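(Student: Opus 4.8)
The plan is to reduce to Theorem~\ref{thm:uniqueness} via the Stone duality between contact algebras and pre-spaces. For $i<2$, put $(X_i, E_i) := S(B_i)$. Since $B_i$ is countable, the basic clopens $[b]$ ($b \in B_i$) form a countable basis, so $S(B_i)$ is second-countable. As $B_i$ is a Boolean algebra that is a basis of closed sets, the construction of \S~\ref{sec:background} gives $a_{B_i}X = S(B_i)$ together with a canonical irreducible continuous surjection $\pi_i : S(B_i) \twoheadrightarrow X$, $x \mapsto \bigcap x$, inducing $E_i$ and satisfying $\pi_i``[b] = b$; via the remark $S(\clopen(X_0)) \cong X_0$ this identifies $\clopen(S(B_i)) \cong B_i$. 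The conventions on $\cmodels$ for pre-spaces then give $\Th_\cmodels(S(B_i)) = \Th_\cmodels(B_i)$; and since each $B_i \subseteq \regular(X)$ is a generated substructure, evaluating the Tarski--Vaught-style Proposition at the singleton good tuple $\trivial^{B_i} = (X)$ yields $\Th_\cmodels(B_i) = \Th_\cmodels(\regular(X))$. Hence $(X_0, E_0)$ and $(X_1, E_1)$ share a cological theory.

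The one substantive point left to check is that each $S(B_i)$, equivalently each $B_i$, is cofinally atomic. I would isolate this as a transfer lemma: if $\regular(X)$ is cofinally atomic and $B \subseteq \regular(X)$ is a generated substructure that is a basis, then $B$ is cofinally atomic. Fix $F$ witnessing Definition~\ref{defi:cofat} for $\regular(X)$ and set $F_B := \{\overline a \in \covers^*(B) \mid \overline a \text{ realizes a principal type}\}$; here principality may be computed in $B$ or in $\regular(X)$ interchangeably, since types of tuples from $B$ agree in both models and the generating condition is the satisfaction of a sentence $\czenbu{n \times 1}(\phi \to \psi)$, which $B$ and $\regular(X)$ share. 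Clauses \ref{item:closed eqv} and \ref{item:ppl} of Definition~\ref{defi:cofat} are then immediate.

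The crux is cofinality, clause~\ref{item:cofinal}. Given $\overline a \in \covers^m(B)$, cofinality of $F$ supplies $\overline c \in F$ following some $f : n \twoheadrightarrow m$ in $\overline a$, with $\tp_\cmodels(\overline c)$ generated by a formula $\phi$. Then $\regular(X), \overline a \cmodels \caru{f}\phi$, so $B, \overline a \cmodels \caru{f}\phi$ by the generated-substructure characterization; let $\overline c' \in \covers^n(B)$ witness this, so $\overline c'$ follows $f$ in $\overline a$ and $\regular(X), \overline c' \cmodels \phi$. As $\phi$ generates the \emph{complete} type $\tp_\cmodels(\overline c)$, we get $\tp_\cmodels(\overline c) \subseteq \tp_\cmodels(\overline c')$, and completeness forces equality; thus $\overline c'$ realizes the principal type $\tp_\cmodels(\overline c)$ and refines $\overline a$, so $\overline c' \in F_B$. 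For directedness, clause~\ref{item:directed}: given $\overline a, \overline a' \in F_B$, Lemma~\ref{lem:directed} (whose construction uses only meets $a_i \wedge a'_j$, which remain in the subalgebra $B$) produces a common refinement in $\covers^*(B)$; refining it further via the cofinality just shown and composing arrangements gives a common refinement inside $F_B$. Hence $F_B$ witnesses cofinal atomicity of $B$.

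With every hypothesis verified, Theorem~\ref{thm:uniqueness} yields an isomorphism $\Phi : (X_0, E_0) \to (X_1, E_1)$ inducing an autohomeomorphism $\sigma : X \to X$, so that $\sigma \circ \pi_0 = \pi_1 \circ \Phi$. Dualizing, each $b \in B_0$ has $\Phi``[b] = [b']$ for a unique $b' \in B_1$, and then $\sigma``b = \sigma``(\pi_0``[b]) = \pi_1``(\Phi``[b]) = \pi_1``[b'] = b'$; thus $\sigma$ maps $B_0$ onto $B_1$, and being a homeomorphism it preserves the contact-algebra structure of $\regular(X)$, so $b \mapsto \sigma``b$ is the desired contact-algebra isomorphism $B_0 \to B_1$ induced by $\sigma$. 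I expect the main obstacle to be precisely the cofinality clause of the transfer lemma: the generated-substructure property transports only the nerve graph $\qftp$ directly, so descending a \emph{principal} refinement from $\regular(X)$ into $B$ must be routed through its generating formula together with completeness of types; the remaining steps are routine bookkeeping with the duality.
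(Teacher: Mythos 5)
Your proposal is correct and follows exactly the route the paper intends for this corollary, which it states without proof immediately after Theorem~\ref{thm:uniqueness}: pass to the Stone duals $S(B_i)$ (second-countable since $B_i$ is countable, with irreducible surjections onto $X$ inducing $E_i$), transfer the theory via the Tarski--Vaught-style Proposition at the trivial tuple, verify cofinal atomicity of the $B_i$, and apply Theorem~\ref{thm:uniqueness}. Your transfer lemma---that a generated substructure of a cofinally atomic model is cofinally atomic, proved by descending principal refinements through their generating formulas and completeness of types, just as in the proof of Theorem~\ref{thm:vaught}---is precisely the step the paper asserts as ``easy to see'' in the proof of Corollary~\ref{cor:uniqueness}, and your detailed verification of it is sound.
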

This result may be summarized as a cofinally atomic compactum
having a canonical pre-space with the same cological theory.

\section{Application: The pseudo-arc}\label{sec:appl-pseudo-arc}
In this final section,
we apply the machinery we have developed to begin model theory of the pseudo-arc.
Most of the technical arguments are aimed toward Lemma~\ref{lem:back and forth}.
From that, it follows that that Irwin and Solecki's pre-space has
the same cological theory as the pseudo-arc (Theorem~\ref{thm:IR pre-space is elementary substr}).
The Lemma also implies the cofinally atomicity of the pseudo-arc
(Corollary~\ref{cor:main})
as well as its homogeneity (Corollary~\ref{cor:homog})
and the canonicity, in the sense commented at the end of the last section,
of Irwin and Solecki's pre-space
among others whose quotient is the pseudo-arc (Corollary~\ref{cor:uniqueness}).
An interesting feature of the arguments below is that we do not quote
well-known facts on the pseudo-arc;
we instead extract information on the continuum
only from Irwin and Solecki's \Fr\ class.

Since the projective \Fr\ theory itself is out of the scope of this article,
we recall only necessary definitions and facts from Irwin and Solecki's work~\cite{Irwin2006ProjectiveFL} in the following.
For more details, the reader is referred to their seminal paper or Kubi\'s~\cite{KUBIS20141755}.

Hereafter,
$\mathbf{\Psi}$ is Irwin and Sokecki's~\cite{Irwin2006ProjectiveFL} \Fr\ class:
\[
  \mathbf{\Psi} = \{G \mid \text{$G$ is a finite linear graph}\}.
\]
Being a \Fr\ class, $\mathbf\Psi$
has the \emph{joint projection property}:
\begin{quote}
  for every $L_0, L_1 \in \mathbf{\Psi}$,
  there exist $L \in \mathbf{\Psi}$ and Irwin-Solecki epis~$f_i : L \twoheadrightarrow L_i$;
\end{quote}
it also has the \emph{amalgamation property}:
\begin{quote}
  for every Irwin-Solecki epis $f_i: L_i \twoheadrightarrow L^-$,
  where $L_0, L_1, L^- \in \mathbf \Psi$,
  there exist $L^+ \in \Psi$ and an Irwin-Solecki epi $\tilde f_i: L^+ \twoheadrightarrow L_i$
  such that $f_0 \circ \tilde f_1 = f_1 \circ \tilde f_0$.
\end{quote}
Let
$\Psi_0 = (\Psi_0, E)$ be its \Fr\ limit, whose
quotient is the pseudo-arc $\Psi$.
The pre-space~$\Psi_0$ is the inverse limit of
$(L_n)_{n<\omega}$ and $\pi^m_n: L_m \to L_n$
where $L_n \in \mathbf{\Psi}$,
and $\pi^m_n$ is an epimorphism in their sense,
or an Irwin-Solecki epi,
such that for every Irwin-Solecki epi $f: L \to L_n$ for $L \in \mathbf{\Psi}$
there exists $g: L_m \to L$ such that $\pi^m_n = f \circ g$
(this is the condition (b) in the proof of \cite[Theorem~2.4]{Irwin2006ProjectiveFL}).
Following Kubi\'s~\cite{KUBIS20141755}, we call this the \emph{amalgamation property}
of $(L_n)$, a \emph{\Fr\ sequence} of $\mathbf{\Psi}$.
The pre-space~$\Psi_0$ is \emph{projectively ultrahomogeneous}:
for any $L \in \mathbf\Psi$ and Irwin-Solecki epis~$f_1, f_2: \Psi_0 \twoheadrightarrow L$,
there exists an isomorphism $\psi: \Psi_0 \to \Psi_0$ with $f_1 \circ \psi = f_2$.
It follows \cite[Lemma 2.3]{Irwin2006ProjectiveFL}
that $\Psi_0$ is \emph{weakly homogeneous} in the sense inspired by Hodges~\cite{Hodges1993}:
if $\phi: L^+ \twoheadrightarrow L^-$ and $\psi: \Psi_0 \to L^-$
are Irwin-Solecki epis, and $L^+, L^- \in \Psi$, then
there exists an Irwin-Solecki epi $\chi: \Psi_0 \twoheadrightarrow L^+$
with $\phi \circ \chi = \psi$.

\begin{lem}\label{lem:semilattice}
  Let $C$ be the subset of $\clopen(\Psi_0)$ consisting of
  elements occurring in some chain in $\covers^*(\Psi_0)$.
  The set~$C$ generates $\clopen(\Psi_0)$ as a $\vee$-semilattice.
\end{lem}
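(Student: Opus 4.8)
The plan is to reduce everything to the canonical clopen partitions coming from the inverse-limit presentation $\Psi_0 = \varprojlim(L_n, \pi^m_n)$ and to observe that each of these partitions is already a chain. Concretely, I would show that every $a \in \clopen(\Psi_0)$ is a finite join of blocks of one such partition, each block of which lies in $C$; this is exactly the assertion that $C$ generates $\clopen(\Psi_0)$ as a $\vee$-semilattice.

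Write $\pi_n \colon \Psi_0 \twoheadrightarrow L_n$ for the limit projections; these are surjective, and $\pi_n = \pi^m_n \circ \pi_m$ for $m \ge n$. First I would record the standard inverse-limit bookkeeping: the sets $\pi_n^{-1}(\{v\})$ ($v \in V(L_n)$) form a clopen partition $\mathcal D_n$ of $\Psi_0$, these partitions refine one another as $n$ grows (since $\pi_n^{-1}(\{v\}) = \bigcup_{\pi^m_n(u) = v}\pi_m^{-1}(\{u\})$), and together they generate the topology. Hence, by compactness, any clopen $a$ is a union of finitely many basic clopens, and after passing to a high enough common level $n$ one obtains $S \subseteq V(L_n)$ with $a = \pi_n^{-1}(S) = \bigvee_{v \in S}\pi_n^{-1}(\{v\})$.

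Next I would verify that $\overline d := (\pi_n^{-1}(\{v\}))_{v \in V(L_n)}$, enumerated along the linear order witnessing that $L_n$ is a linear graph, is a chain in $\covers^*(\Psi_0)$. Since $\pi_n$ is surjective the blocks are nonempty, and being distinct pairwise-disjoint clopen sets whose union is $\Psi_0$ they form a good tuple (they merely touch, and minimality is automatic). For the chain condition I would use that the pre-space relation $E$ of $\Psi_0$ is the inverse limit of the edge relations, so that $x \mathbin E y$ forces $(\pi_n(x),\pi_n(y)) \in E(L_n)$; as $L_n$ is a linear graph, its $i$-th and $j$-th vertices are adjacent only when $|i-j| \le 1$. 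Therefore $\pi_n^{-1}(\{v_i\}) \prox \pi_n^{-1}(\{v_j\})$ implies $|i-j| \le 1$, which is precisely the defining condition of a chain. Consequently each block $\pi_n^{-1}(\{v\})$ belongs to $C$, and $a = \bigvee_{v \in S}\pi_n^{-1}(\{v\})$ exhibits $a$ as a finite join (the empty join when $a = \0$) of members of $C$. As $a$ was arbitrary, $C$ generates $\clopen(\Psi_0)$ as a $\vee$-semilattice.

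The one step carrying genuine content is the chain condition, which rests on two facts specific to this setting: that the objects of $\mathbf{\Psi}$ are honest linear graphs, so adjacency coincides with being at path-distance at most one, and that the pre-space relation $E$ refines each pulled-back edge relation $(\pi_n \times \pi_n)^{-1}(E(L_n))$. The remaining ingredients—the cylinder representation of clopen sets and surjectivity of the projections—are routine inverse-limit facts, and I would treat them briefly.
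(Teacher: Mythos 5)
Your proof is correct, but it takes a genuinely different route from the paper's. The paper argues intrinsically in the contact-algebra formalism: it takes for granted that $C$ generates $\clopen(\Psi_0)$ as a \emph{Boolean algebra}, and then reduces $\vee$-semilattice generation to the single claim that a meet $a' \wedge b'$ of two elements of $C$ is a join of elements of $C$; this is proved by embedding $a', b'$ into chains $\overline a, \overline b$, taking a common chain refinement $\overline c$ (via the joint projection property of $\mathbf\Psi$), and invoking the consolidation Lemma~\ref{lem:refinement}.\ref{item:consolidation} to write $a' \wedge b' = \bigvee\{c_k \mid f(k)=i,\ g(k)=j\}$ with every $c_k \in C$. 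You instead work directly with the presentation $\Psi_0 = \varprojlim(L_n, \pi^m_n)$: every clopen is a finite join of level-$n$ cylinders, and each level partition $(\pi_n^{-1}(\{v\}))_v$, enumerated along the linear order of $L_n$, is a chain, so the cylinders all lie in $C$. Your argument is more elementary and in fact makes explicit the very fact the paper's first sentence assumes without proof (Boolean generation by $C$ rests on exactly your cylinder analysis), while the paper's argument never fixes a presentation and so transfers to other projective \Fr\ limits in which covers admit common chain refinements. One small point: the paper's displayed definition of \emph{chain} (non-consecutive entries do not touch) is what you verify, and for the Boolean algebra $\clopen(\Psi_0)$ the condition should indeed be read via the proximity $\prox$, as you correctly do; but the usage elsewhere in the paper (e.g., Lemma~\ref{sec:undo covering walk}) presumes consecutive entries \emph{are} proximal, i.e., $\qftp(\overline d) = \chaingraph{n}$. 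If that stronger reading is intended for membership in $C$, you need the converse direction $\pi_n^{-1}(\{v_i\}) \prox \pi_n^{-1}(\{v_{i+1}\})$ as well; this is immediate from the fact that the limit projections $\pi_n$ are themselves Irwin--Solecki epis, whose second condition lifts each edge of $L_n$ to an $E$-pair in $\Psi_0$ --- worth adding one sentence, since you already use the first epi condition for the forward direction.
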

\begin{proof}
  Since $\clopen(\Psi_0)$ is generated by $C$ as a Boolean algebra,
  it suffices to show that for $a', b' \in C$,
  the meet~$a' \wedge b'$ is the join of some elements of $C$.
  Take chains~$\overline a, \overline b \in \covers^*(\Psi_0)$
  and indices~$i < |\overline a|, j < |\overline b|$
  such that $a_i = a'$ and $b_j = b'$.
  By the projection, there exists a chain~$c \in \covers^*(\Psi_0)$
  refining, and thus by Lemma~\ref{lem:refinement}.\ref{item:consolidation} consolidating,
  both $\overline a$ and $\overline b$.
  Let $f, g$ the pattern followed by $\overline a$ and $\overline b$,
  respectively, in $\overline c$.
  Each entry of $\overline c$ is in $C$,
  and we have $a' \wedge b' = \bigvee \{ c_k \mid k < |\overline c|, f(k) = i, g(k) = j\}$
  as desired.
\end{proof}

\begin{lem}\label{lem:density}
  Suppose that $\pi: \Psi_0 \twoheadrightarrow \Psi$
  is a continuous surjection inducing $E$.
  Let $A := e_\pi(\clopen(\Psi_0))$ and $B := \regular(\Psi)$.
  Every good tuple in $\covers^*(B)$ is refined by another in $\covers^*(A)$,
  which can be chosen to be a chain.
\end{lem}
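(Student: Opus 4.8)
The plan is to transfer the problem to the pre-space $\Psi_0$ and solve it there with a clopen partition. Fix $\overline b = (b_0,\dots,b_{n-1}) \in \covers^*(B)$. Since $e_\pi$ is a contact algebra isomorphism of $\clopen(\Psi_0)$ onto $A$ (the Proposition of \S\ref{sec:background}), producing a refinement of $\overline b$ inside $\covers^*(A)$ amounts to producing clopen sets $(P_k)_k$ of $\Psi_0$ whose images $\pi``P_k$ merely touch, cover $\Psi$, and satisfy $\pi``P_k \subseteq b_{i(k)}$ for some $i(k)$. The containment $\pi``P_k \subseteq b_i$ is equivalent to $P_k \subseteq \pi^\inv(b_i)$; and if the $P_k$ are arranged into a clopen \emph{partition} of $\Psi_0$, their images automatically merely touch and cover, since images of clopen partitions under the irreducible $\pi$ are regular closed covers that merely touch (as recorded in \S\ref{sec:background} via \cite[Theorem~6.5(d)]{Porter1988}), and nonzero members of a merely touching family are automatically incomparable. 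So it suffices to find a clopen partition $(P_k)$ of $\Psi_0$ refining the closed cover $(\pi^\inv(b_i))_{i<n}$, meaning each $P_k$ sits inside some $\pi^\inv(b_i)$.

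First I would reduce the existence of such a partition to the single covering statement
\[
  \Psi_0 = \bigcup_{i<n} \operatorname{int}\pi^\inv(b_i).
\]
Granting this, the sets $U_i := \operatorname{int}\pi^\inv(b_i)$ form a finite open cover of the totally disconnected compact metrizable, hence strongly zero-dimensional, space $\Psi_0$; one shrinks them to clopen $P_i \subseteq U_i$ still covering $\Psi_0$ and disjointifies by $P_i' := P_i \setminus \bigcup_{j<i} P_j$ to obtain the required partition, with $P_i' \subseteq U_i \subseteq \pi^\inv(b_i)$. Conversely, any clopen partition refining $(\pi^\inv(b_i))$ witnesses the displayed equality, because its pieces are open; so the covering statement is not only sufficient but equivalent to the conclusion of the lemma.

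The heart of the matter is therefore this covering statement, and here the only real difficulty lies. Over the dense open set $\bigcup_i b_i^\circ$ it is immediate: if $\pi(x) \in b_i^\circ$ then $x \in \pi^\inv(b_i^\circ) \subseteq \operatorname{int}\pi^\inv(b_i)$. The problem is confined to the touching set $T := \Psi \setminus \bigcup_i b_i^\circ = \bigcup_{i\neq j}(b_i \cap b_j)$, which is closed and, by irreducibility of $\pi$, has nowhere dense preimage. I must show that each fibre point $x$ over a point $t \in T$ nonetheless has a neighbourhood in $\Psi_0$ mapping into a single $b_i$; equivalently, that $x$ avoids $\overline{\pi^\inv(b_j^\circ)}$ for all but one index $j$ with $t \in b_j$. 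This is the step that genuinely uses the pseudo-arc: the fibres of $\pi$ have at most two points, and contact in $\clopen(\Psi_0)$ is witnessed exactly by the two-point $E$-classes, so the meeting of $b_i$ and $b_j$ at $t$ must be resolved upstairs by a two-point class whose members lie on opposite sides, hereditary indecomposability preventing a single fibre point from being a limit of the interiors of two distinct pieces at once. Establishing this separation — that $\pi$ resolves every touching of a merely touching cover through its at-most-two-point fibres — is the main obstacle, and I would extract it from Irwin and Solecki's description of $\Psi_0$~\cite{Irwin2006ProjectiveFL} rather than from external topological facts about the pseudo-arc.

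An alternative route, which I would keep in reserve, packages the same content through chains. By Lemma~\ref{lem:semilattice} the clopen algebra of $\Psi_0$ is $\vee$-generated by elements occurring in chains, and by the projective ultrahomogeneity and weak homogeneity of $\Psi_0$ the chain covers arising from $\Psi_0$ are cofinal in mesh; a sufficiently fine such chain cover lies in $\covers^*(A)$ and would refine $\overline b$. The obstruction is identical — one must align the breaks of the chain with $T$ — and is again resolved by the crookedness forced by hereditary indecomposability. Either way, the single nontrivial point is the behaviour of $\pi$ over the touching set; everything else is the soft reduction above together with the strong zero-dimensionality of $\Psi_0$.
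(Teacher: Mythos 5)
Your soft reduction is sound: producing a refinement of $\overline b \in \covers^n(B)$ inside $\covers^*(A)$ is indeed equivalent to the covering statement $\Psi_0 = \bigcup_{i<n}\operatorname{int}\pi^\inv(b_i)$, by strong zero-dimensionality, disjointification, the fact that images of clopen partitions under the irreducible $\pi$ merely touch, and the automatic incomparability and surjectivity of the induced arrangement. But that covering statement \emph{is} the lemma --- all of its content is concentrated there --- and your proposal stops exactly at that point. The sentences ``the meeting of $b_i$ and $b_j$ at $t$ must be resolved upstairs by a two-point class'' and ``I would extract it from Irwin and Solecki's description'' are promissory notes, not arguments: no mechanism is given by which the \emph{countable} algebra $A$ resolves the touchings of an \emph{arbitrary} pair $b_i, b_j \in \regular(\Psi)$, and even the auxiliary claim that fibres of $\pi$ have at most two points is imported without proof. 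The gap is genuine and not a routine verification, because the covering statement is \emph{false} for general pre-spaces with irreducible natural surjection: take $X = [0,1]$, $b_0 = [0,1/2]$, $b_1 = [1/2,1]$, and $X_0 = a_{\mathcal B_0}X$ where $\mathcal B_0$ consists of the closed intervals with endpoints in $\left(\{q + \sqrt 2 \mid q \in \mathbb Q\} \cap (0,1)\right) \cup \{0,1\}$. Then $1/2$ lies on the boundary of no member of the generated algebra, so the fibre over $1/2$ is a single ultrafilter $x$, and every basic clopen neighbourhood $[F] \ni x$ has $\pi``[F] = F$ with $1/2 \in F^\circ$, hence $F \not\subseteq b_0$ and $F \not\subseteq b_1$; the covering statement fails, and $(b_0, b_1)$ has no refinement in $\covers^*(e_\pi(\clopen(X_0)))$ at all. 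So the lemma encodes a special property of $\Psi_0$, and the step you defer is precisely where a proof must do its work. Your fallback route through fine chain covers has, as you yourself note, the identical hole: small mesh does not force links into single $b_i$'s, since $(b_i^\circ)_{i<n}$ is not an open cover.

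The paper's own proof is of an entirely different character and involves no fibre analysis, no interiors, and no appeal to hereditary indecomposability (indeed the paper makes a point of extracting everything from the \Fr\ class rather than from known topology of the pseudo-arc): it invokes Lemma~\ref{lem:semilattice}, that $\clopen(\Psi_0)$ is $\vee$-generated by elements occurring in chains, writes each cover element as a finite join of such chain elements, and obtains a common chain refinement of the finitely many chains involved from the joint projection property of $\mathbf\Psi$, transported to $\covers^*(A)$ via the homogeneity of $\Psi_0$. (As printed, that proof starts from a tuple in $\covers^*(A)$, so it too engages the passage from $B$-tuples to $A$-tuples only implicitly; you at least confront the $B$-level difficulty head-on, but confronting it is not the same as resolving it.) To repair your attempt you would need an actual proof that every touching of an arbitrary merely touching regular closed cover of $\Psi$ is split by the fibres of $\pi$ --- a statement your own equivalence shows to be exactly as hard as the lemma --- whereas the paper's algebraic route circumvents this entirely.
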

\begin{proof}
  Let $m<\omega$ and $\overline a \in \covers^m(A)$ be arbitrary.
we claim that there exists a chain
  $\overline c \in \covers^n(A)$
  following an arrangement~$g: n \twoheadrightarrow m$
  in $\overline a$.
  To see this,
  recall that for each $i < m$,
  there exist $k_i < \omega$
  and chains~$\overline c^{ij} \in \covers^*(A)$ ($j < k_i$)
  such that $a_i = \bigvee_{j < k_i}c^{ij}_0$ by Lemma~\ref{lem:semilattice}.
  Then we may take $\overline c$
  to be a common chain refinement of the finitely many chains~$\overline c^{ij}$,
  which exists by the joint projection property
  of $\mathbf \Psi$.
\end{proof}

\begin{lem}\label{lem:back and forth}
  Suppose that $\pi: \Psi_0 \twoheadrightarrow \Psi$
  is a continuous surjection inducing $E$,
  and let $A := e_\pi(\clopen(\Psi_0))$ and
  $B := \regular(\Psi)$.
  Consider the binary relation\[I := \{(\overline a, \overline a') \mid \overline a \in \covers^*(A), \overline a' \in \covers^*(B), \qftp(\overline a) = \qftp(\overline a)\}.\]
  \begin{enumerate}[(i)]
  \item The relation~$I$ satisfies Definition~\ref{defi:back-and-forth}.\ref{item:back}.
  \item If $\overline a' \in \covers^m(B)$ is a chain,
    and $f: M \twoheadrightarrow m$ is a pattern,
    then there exists a chain~$\overline b' \in \covers^M(B)$
    following $f$ in $\overline a'$.
  \item The relation~$I$ satisfies Definition~\ref{defi:back-and-forth}.\ref{item:forth}.
  \end{enumerate}
\end{lem}
Note that Lemma implies that $I$ is a back-and-forth system from $A$ to $B$.
\begin{proof}
  \begin{enumerate*}[(i), mode=unboxed]
  \item\label{item:back proof}
    Let $\overline a \in \covers^m(A)$,
    $\overline a' \in \covers^M(B)$,
    $\overline b' \in \covers^M(B)$,
    and $f: M \twoheadrightarrow m$.
    Assume that $\overline b'$ follows the arrangement~$f$ in $\overline a'$.
    We are to find $\overline b \in \covers^M(A)$
    following the arrangement~$f$ in $\overline a$.

    First, there exists a chain
    $\overline c \in \covers^n(A)$
    following an arrangement~$g: n \twoheadrightarrow m$
    in $\overline a$ by Lemma~\ref{lem:density}.

    Next, by Lemma~\ref{lem:directed}, there exists a
    $\overline c' \in \covers^N(B)$ refining both $\overline b'$ and $\overline c$;
    let $\tilde g$, $\tilde f$ be the arrangements that $\overline c'$ follows
    in $\overline b'$ and $\overline c$, respectively.
    Note that the arrangement that $\overline c'$ follows in $\overline a$
    is
    \begin{equation}
      \label{eq:arr}
      g \circ \tilde f = f \circ \tilde g.
    \end{equation}
    Again by Lemma~\ref{lem:density},
    we may assume without loss of generality that $\overline c'$
    is a chain.
    By Lemma~\ref{lem:obvious}.\ref{item:automatically a pattern}, $\tilde f$ is a pattern.
    
    It follows from the weak homogeneity of $\Psi_0$ that
    there exists $\overline c'' \in \covers^N(A)$ following the pattern~$r$
    in $\overline c$.
    By (\ref{eq:arr}),
    the chain $\overline c''$ follows the arrangement~$g \circ \tilde f
    = f \circ \tilde g$
    in $\overline a$.
    Define $\overline b := \tilde g^* \overline c''$.
    By Lemma~\ref{lem:undo},
    $\overline b$ follows the arrangement~$f$ in $\overline a$.
    To conclude that $(\overline b, \overline b') \in I$, note that
    $\qftp(\overline b) = \qftp(\overline b')$,
    and thus $(\overline b, \overline b') \in I$,
    by Lemma~\ref{lem:refinement}.\ref{item:qftp}.
    \\
  \item By Lemma~\ref{lem:density},
    there exists a chain~$\overline c \in \covers^*(A)$ refining $\overline a'$;
    let $g$ be the arrangement that $\overline c$ follows in $\overline a'$.
    Let $M' = |\overline c|$.
    Consider the finite graphs~$\chaingraph M, \chaingraph m, \chaingraph{M'} \in \mathbf \Psi$.
    Since $f: \chaingraph M \twoheadrightarrow \chaingraph m$
    and $g: \chaingraph{M'} \twoheadrightarrow \chaingraph m$
    are Irwin-Solecki epis,
    by the amalgamation property of $\Psi$
    there exist $G \in \mathbf \Psi$ and Irwin-Solecki epis
    $\tilde g: G \twoheadrightarrow \chaingraph M$
    and $\tilde f: G \twoheadrightarrow \chaingraph{M'}$
    such that $g \circ \tilde f = f \circ \tilde g$.
    Without loss of generality, we may assume that the domain of $G$
    is an initial segment of $\omega$,
    in which case $\tilde f$ and $\tilde g$ are patterns.
    By the weak homogeneity, we may take $\overline c' \in \covers^*(A)$
    following the pattern~$\tilde f$;
    $\overline c'$ follows the arrangement~$g \circ \tilde f$.
    Let
    $\overline b' = \tilde g^* \overline c'$.
    By Lemma~\ref{lem:undo}, $\overline b'$ follows $f$ in $\overline a'$,
    and by Lemma~\ref{lem:obvious}.\ref{item:pullback of a chain by a pattern is a chain}, $\overline b'$ is a chain.

    \\
  \item Let $(\overline a, \overline a') \in I$,
    and assume that
    $\overline b \in \covers^*(A)$ follows $f$ in $\overline a$.
    By Lemma~\ref{lem:density},
    we may find a chain~$\overline c' \in \covers^*(B)$ following, say, $g$
    in $\overline a'$.
    By \ref{item:back proof},
    take $\overline c \in \covers^*(A)$ following $g$ in $\overline a$.
    Take a common refinement~$\overline d \in \covers^*(A)$ of $\overline c$
    and $\overline c'$ by Lemma~\ref{lem:directed};
    by Lemma~\ref{lem:density} we may assume that it is a chain.
    Let $\tilde g, \tilde f$ be the arrangements that $\overline d$
    follows in $\overline c$ and $\overline c'$, respectively.
    By computing the arrangement that $\overline d$ follows in $\overline a$
    in two ways, $f \circ \tilde g = g \circ \tilde f$.
    Let $\overline b' = \tilde g^* \overline d$.
    By Lemma~\ref{lem:undo},
    $\overline b'$ follows $f$ in $\overline a'$,
    and by Lemma~\ref{lem:refinement}.\ref{item:qftp},
    $\qftp(\overline b) = \qftp(\overline b')$
    (whence $(\overline b, \overline b') \in I$).
  \end{enumerate*}
\end{proof}

\begin{thm}\label{thm:IR pre-space is elementary substr}
  Suppose that $\pi: \Psi_0 \twoheadrightarrow \Psi$
  is a continuous surjection inducing $E$.
  Then $A := e_\pi(\clopen(\Psi_0))$ is an elementary substructure of
  $B := \regular(\Psi)$.
\end{thm}
\begin{proof}
  Use the preceding Lemma and Theorem~\ref{prop:back and forth}.
\end{proof}

The preceding Theorem can alternatively be proved from the following
consequence of Lemma~\ref{lem:back and forth}.

\begin{cor}
  The cological theory of $\Psi$ eliminates quantifiers:
  for each cological formula~$\phi$ in context~$n$, there exists a
  $\langle \bullet \rangle$-free formula~$\phi^*$ in context~$n$
  such that \[\Psi \cmodels [n \times 1](\phi^* \leftrightarrow \phi),\]
  where $n \times 1$ is the unique arrangement~$n \twoheadrightarrow 1$.
\end{cor}

\begin{cor}\label{cor:principal}
  Let $n < \omega$ be arbitrary.
  All chains in $\covers^n(\Psi)$ realize the same cological type,
  which is principal.
\end{cor}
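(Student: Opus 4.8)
The plan is to push the whole question down to the pre-space $\Psi_0$, where projective ultrahomogeneity makes all chains of a fixed length conjugate, and then to read off principality from the atomic formula $\chaingraph n$. Every chain $\overline a \in \covers^n(\Psi)$ satisfies $\qftp(\overline a) = \chaingraph n$, so the chains already agree on the atomic formula $\chaingraph n$; what must be shown is that they agree on \emph{all} $n$-formulas, i.e.\ that they realize a single common type, after which principality is essentially automatic.

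First I would fix a chain $\overline a \in \covers^n(\Psi)$ and apply Proposition~\ref{prop:homog} to obtain a continuous surjection $\pi\colon\Psi_0\twoheadrightarrow\Psi$ inducing $E$ together with $\overline a'\in\covers^*(\Psi_0)$ whose $\pi$-image is $\overline a$. Since $e_\pi$ is a contact-algebra embedding it reflects proximity, so $\qftp(\overline a')=\qftp(\overline a)=\chaingraph n$ and $\overline a'$ is again a chain; moreover $\overline a=e_\pi(\overline a')\in\ran e_\pi=:A$. Theorem~\ref{thm:IR pre-space is generated substr} gives that $A$ is a generated substructure of $\regular(\Psi)$, and the Tarski--Vaught-type characterization of generated substructures then forces the cological type of $\overline a$ in $\Psi$ to equal its type computed in $A$; transporting along the isomorphism $e_\pi$, this equals the type of the chain $\overline a'$ in $\Psi_0$. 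Hence the type of every chain in $\covers^n(\Psi)$ is the $\Psi_0$-type of \emph{some} chain of length $n$.

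Next I would show that all chains of length $n$ in $\Psi_0$ realize a single type. A chain $\overline a'\in\covers^n(\Psi_0)$ is a clopen partition of $\Psi_0$, and the associated quotient map onto $\qftp(\overline a')=\chaingraph n$ is an Irwin--Solecki epi onto the linear graph $\chaingraph n\in\mathbf\Psi$: the first epi condition holds because nerve edges are defined by proximity, and the edge-lifting condition is immediate from that same definition. Given two such chains, their quotient maps target the \emph{same} labelled graph $\chaingraph n$, so projective ultrahomogeneity of $\Psi_0$ supplies an automorphism $\psi\colon\Psi_0\to\Psi_0$ carrying one partition to the other. Since $\psi$ induces a contact-algebra automorphism of $\clopen(\Psi_0)$ and cological satisfaction is invariant under such automorphisms, the two chains realize the same type. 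Combined with the previous paragraph, all chains in $\covers^n(\Psi)$ realize one type $p_n$.

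It then remains only to see that $p_n$ is principal, with generating formula $\chaingraph n$ itself. For any $\psi\in p_n$ and any good tuple $\overline b\in\covers^n(\Psi)$: if $\Psi,\overline b\cmodels\chaingraph n$ then $\overline b$ is a chain and hence $\Psi,\overline b\cmodels\psi$ since every chain realizes $p_n$; otherwise $\Psi,\overline b\cmodels\chaingraph n\to\psi$ holds vacuously. This is exactly the condition that $\chaingraph n$ generate $p_n$. The hard part of the argument is the reduction carried out in the middle two paragraphs---pulling the chain into $\ran e_\pi$ via Proposition~\ref{prop:homog}, invoking Theorem~\ref{thm:IR pre-space is generated substr} to transport types faithfully, and then applying projective ultrahomogeneity in $\Psi_0$; once chains are known to be conjugate in $\Psi_0$, both the common-type and the principality assertions fall out cleanly.
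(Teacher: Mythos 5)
Your proposal is correct and takes essentially the same route as the paper: pull the chains back to $\Psi_0$ via Proposition~\ref{prop:homog}, conjugate them there by projective ultrahomogeneity (the quotient maps of clopen-partition chains being Irwin--Solecki epis onto the same linear graph), and obtain principality from the generating formula~$\chaingraph n$. The only difference is at the transfer step, where the paper descends the $\Psi_0$-automorphism to an action on $\covers^*(\Psi)$ (implicitly using that both surjections induce $E$), while you instead lift types into $\Psi_0$ via Theorem~\ref{thm:IR pre-space is generated substr} and the Tarski--Vaught-style proposition --- a harmless variant that, if anything, makes explicit a step the paper leaves terse.
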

\begin{proof}
  The first claim follows from Lemma~\ref{lem:back and forth}.
  To see the second claim,
  note that for every formula~$\phi \in p$,
  \[
    \Psi \cmodels \czenbu{n \times 1}
    (\chaingraph n \to \phi)
  \]
  (recall that $\chaingraph n = \qftp(\overline a)$).
  In other words, $p$ is generated by $\chaingraph n$.
\end{proof}
The same argument shows that every $\overline a \in \covers^*(\Psi)$
realizes a principal type, generated by $\qftp(\overline a)$.
The focus here is instead to see that the homogeneity results at the end
of this article
follows from the weaker condition of cofinal atomicity.
\begin{cor}\label{cor:main}
    The pseudo-arc~$\Phi$ is cofinally atomic.
\end{cor}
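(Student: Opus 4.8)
The plan is to take $F$ to be the set of \emph{all} chains in $\covers^*(\Psi)$ and to check that it witnesses cofinal atomicity in the sense of Definition~\ref{defi:cofat}, since the preceding results have been arranged to supply all four conditions. Two of them are immediate. Condition~\ref{defi:cofat}.\ref{item:cofinal} is exactly Lemma~\ref{lem:chains are cofinal}, which says chains are cofinal in $\covers^*(\Psi)$. Condition~\ref{defi:cofat}.\ref{item:ppl} is exactly Corollary~\ref{cor:principal}, which says every chain realizes a principal cological type. Thus the real work is confined to conditions \ref{defi:cofat}.\ref{item:closed eqv} and \ref{defi:cofat}.\ref{item:directed}.

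For closure under cological equivalence, the point I would stress is that membership in $F$ is detected by a single atomic formula. A good tuple $\overline a \in \covers^n(\Psi)$ is a chain precisely when $\qftp(\overline a) = \chaingraph n$, that is, precisely when $\Psi, \overline a \cmodels \chaingraph n$, reading $\chaingraph n$ as the $n$-formula naming the chain graph on $n$ vertices. Hence, if $\overline a \in F$ and $\overline b$ realizes the same cological type, then $\chaingraph n$ belongs to that common type, forcing $\qftp(\overline b) = \chaingraph n$, so $\overline b \in F$ as well. (Note that types are tied to their context, so an equivalent tuple automatically has length $n$.)

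For directedness I would combine common refinements with the cofinality of chains. Given chains $\overline a, \overline b \in F$, Lemma~\ref{lem:directed} produces a common refinement $\overline d \in \covers^*(\Psi)$, which in general need not be a chain; Lemma~\ref{lem:chains are cofinal} then supplies a chain $\overline c \in \covers^*(\Psi)$ refining $\overline d$. Since refinement is transitive---if $\overline c$ follows an arrangement $g$ in $\overline d$ and $\overline d$ follows $f$ in $\overline a$, then $c_k \le d_{g(k)} \le a_{f(g(k))}$, so $\overline c$ follows $f \circ g$ in $\overline a$---the chain $\overline c$ refines both $\overline a$ and $\overline b$, and $\overline c \in F$. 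This gives directedness of the refinement relation on $F$.

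None of these steps is genuinely hard; the only place calling for care is directedness, where one must remember that the common refinement handed over by Lemma~\ref{lem:directed} is typically not itself a chain and must be passed through Lemma~\ref{lem:chains are cofinal} before transitivity of refinement closes the argument. Once $F$ is seen to satisfy all four clauses of Definition~\ref{defi:cofat}, the pseudo-arc $\Psi$ is cofinally atomic, which is the assertion of the Corollary.
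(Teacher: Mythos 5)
Your proposal is correct and follows essentially the same route as the paper, whose proof is a one-line citation of Lemma~\ref{lem:chains are cofinal} and Corollary~\ref{cor:principal} with $F$ implicitly taken to be the set of all chains. Your explicit verifications of clauses \ref{defi:cofat}.\ref{item:closed eqv} (chainhood is captured by the atomic formula $\chaingraph n$, hence type-invariant) and \ref{defi:cofat}.\ref{item:directed} (common refinement via Lemma~\ref{lem:directed}, then a chain refinement via Lemma~\ref{lem:chains are cofinal}, then transitivity of refinement) correctly fill in the details the paper leaves implicit.
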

\begin{proof}
  This follows form Corollary~\ref{cor:principal}
  and the cofinality of chains.
\end{proof}
\begin{cor}\label{cor:homog}
  For $\overline a^i \in \covers^*(X)$ ($i<2$),
  if they are of the same nerve (e.g., if they are both chains),
  then
  there exists an autohomeomorphism $\sigma: X \to X$
  that maps $\overline a^1$ to $\overline a^0$.
\end{cor}
\begin{proof}
  Lemma~\ref{lem:back and forth}, Corollary~\ref{cor:main},
  and Theorem~\ref{thm:vaught}.
\end{proof}
\begin{cor}\label{cor:uniqueness}
  Let $\Psi_0' = (\Psi_0', E')$ be a pre-space and $\pi': \Psi_0' \twoheadrightarrow \Psi$
  a continuous surjection inducing $E'$.
  If the image of $\clopen(\Psi_0')$ under $e_{\pi'}$ is an elementary
  substructure of $\regular(\Psi)$,
  then there exists an isomorphism~$\phi: \Psi_0' \to \Psi_0$
  such that $\pi \circ \phi = \pi'$,
  where $\pi: \Psi_0 \twoheadrightarrow \Psi$ is the canonical surjection.
\end{cor}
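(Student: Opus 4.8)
The plan is to read the statement as the instance of Theorem~\ref{thm:uniqueness} obtained by applying it to the two pre-spaces $\Psi_0'$ and $\Psi_0$ sitting over the common quotient~$\Psi$, and to supply its two hypotheses---equal cological theory and cofinal atomicity---from the generated-substructure input. Writing $A := e_\pi(\clopen(\Psi_0))$ and $A' := e_{\pi'}(\clopen(\Psi_0'))$ as contact subalgebras of $\regular(\Psi)$, Theorem~\ref{thm:IR pre-space is generated substr} makes $A$ a generated substructure of $\regular(\Psi)$, while $A'$ is one by hypothesis. Applying the Proposition that characterises generated substructures to the singleton tuple~$\trivial$, both $A$ and $A'$ satisfy exactly the cological sentences of $\regular(\Psi)$; hence $\Psi_0$ and $\Psi_0'$ share the cological theory of $\Psi$, and therefore agree with one another.

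The first substantive step is to push cofinal atomicity from $\Psi$ (Corollary~\ref{cor:main}) down to each generated substructure, taking the chains as the witnessing family~$F$ of Definition~\ref{defi:cofat}. Cofinality of the chains of $A'$ in $\covers^*(A')$ follows from Lemma~\ref{lem:chains are cofinal}: given $\overline a \in \covers^*(A')$ there is a chain refining it in $\regular(\Psi)$, and the generated-substructure property returns a refinement that lies in $A'$ and has the same $\qftp$, hence is again a chain. Closure under cological equivalence is clear, and directedness follows by taking a common refinement (Lemma~\ref{lem:directed}, computed inside the subalgebra $A'$) and then refining it to a chain. Principality of the chain type transfers because its generating sentence $\czenbu{n \times 1}(\chaingraph n \to \psi)$ from Corollary~\ref{cor:principal} is a sentence, so it holds in $A'$ exactly when it holds in $\regular(\Psi)$. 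The identical argument handles $A$, so both $\Psi_0$ and $\Psi_0'$ are cofinally atomic. One should also record the second-countability demanded by Theorem~\ref{thm:uniqueness}: $\Psi_0$ is second countable as a projective \Fr\ limit, and, assuming as the ambient setting requires that $A'$ is a countable basis, $\Psi_0'$ is second countable as well.

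With both hypotheses secured, Theorem~\ref{thm:uniqueness} yields an isomorphism $\phi_0 : \Psi_0' \to \Psi_0$ inducing an autohomeomorphism $\sigma$ of $\Psi$, so that $\pi \circ \phi_0 = \sigma \circ \pi'$. The main obstacle is to improve this to the stated equality $\pi \circ \phi = \pi'$, that is, to arrange $\sigma = \mathrm{id}$. I expect to do so by absorbing $\sigma$ into the target: lifting $\sigma^{-1}$ to an automorphism $\tau$ of $\Psi_0$ with $\pi \circ \tau = \sigma^{-1}\circ \pi$ through the projective ultrahomogeneity of $\Psi_0$, and then taking $\phi := \tau \circ \phi_0$, which gives $\pi \circ \phi = \sigma^{-1}\circ\sigma\circ\pi' = \pi'$. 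The delicate point---where the argument must be made rather than quoted---is precisely this lifting: it requires that the autohomeomorphism $\sigma$ produced by the back-and-forth respect the distinguished countable basis~$A$, i.e.\ that $\sigma[A] = A$, so that its Stone dual is genuinely an automorphism of $\Psi_0$ and not merely an isomorphism onto some other pre-space over~$\Psi$. I would secure this by running the back-and-forth of Theorem~\ref{thm:uniqueness} against the inclusions of $A$ and $A'$ into $\regular(\Psi)$, matching each enumerated basis element with one realising the same chain type; checking that this constrained construction still closes up at every stage, so that the induced map on $\Psi$ is the identity by design, is the crux of the proof.
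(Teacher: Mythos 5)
Your first two paragraphs are, in substance, the paper's own proof with the elided details restored: the paper's entire argument is the observation that a generated substructure of a cofinally atomic model is again cofinally atomic, followed by citations of Corollary~\ref{cor:main} and Theorem~\ref{thm:uniqueness}. Your chain-based verification of Definition~\ref{defi:cofat} inside $A'$ (cofinality via Lemma~\ref{lem:chains are cofinal} plus the generated-substructure property, which preserves $\qftp$ and hence chain-hood; principality via transfer of the sentence generating the chain type), the transfer of the cological theory through the Tarski--Vaught-style proposition applied to the singleton tuple, and your flag that second countability of $\Psi_0'$ has to be assumed (the statement indeed omits it, and $S(\mathcal{R}(\Psi))$ shows it is not automatic) are all correct and faithful to what the paper leaves as ``easy to see.''

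The gap is in your third paragraph, and the obstruction you sensed there is fatal rather than merely delicate. If $\phi\colon \Psi_0' \to \Psi_0$ is \emph{any} isomorphism with $\pi\circ\phi=\pi'$, then for every clopen $P \subseteq \Psi_0$ one has $e_{\pi'}(\phi^{-1}(P)) = \pi'``\phi^{-1}(P) = \pi``(\phi``\phi^{-1}(P)) = \pi``P = e_\pi(P)$, and since $P \mapsto \phi^{-1}(P)$ is a bijection of the clopen algebras, this forces $A' = A$ as subalgebras of $\mathcal{R}(\Psi)$. So the exact equality $\pi\circ\phi=\pi'$ is \emph{equivalent} to $A'=A$, which the generated-substructure hypothesis does not supply: taking $\Psi_0'=\Psi_0$ and $\pi'=\sigma\circ\pi$ gives $A'=\sigma``A$, a generated substructure for every autohomeomorphism $\sigma$, and nothing forces $\sigma``A=A$ for all $\sigma$. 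Consequently no constrained back-and-forth can make the induced map on $\Psi$ ``the identity by design,'' and your lift $\tau$ of $\sigma^{-1}$ exists precisely when $\sigma``A=A$, i.e., when $A'=A$ already; note also that projective ultrahomogeneity as stated lifts Irwin--Solecki epis onto \emph{finite} graphs, not autohomeomorphisms of $\Psi$, so that step would need a separate argument in any case. What is actually provable---and all that the paper's own two-line proof delivers, since Theorem~\ref{thm:uniqueness} only promises an isomorphism \emph{inducing} an autohomeomorphism---is $\pi\circ\phi=\sigma\circ\pi'$ for some autohomeomorphism $\sigma$ of $\Psi$. You were right to distrust the displayed equality; the repair is to weaken the conclusion to this twisted form (or to strengthen the hypothesis to $A'=A$, in which case the remark in \S~\ref{sec:background} that every irreducible surjection arises as the natural surjection of $a_{\ran e_\pi}\Psi$ already yields $\phi$ with $\pi\circ\phi=\pi'$, with no back-and-forth at all).
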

\begin{proof}
  It is easy to see that an elementary substructure of a cofinally atomic
  model is again cofinally atomic.
  To conclude, recall Corollary~\ref{cor:main} and Theorem~\ref{thm:uniqueness}.
\end{proof}
\bibliographystyle{plain}
\bibliography{thebib}
\end{document}